\DeclareMathOperator{\Spec}{Spec}
\newcommand{\ZZ}{\mathbb{Z}}
\newcommand{\PP}{\mathbb{P}}
\renewcommand{\AA}{\mathbb{A}}
\DeclareMathOperator{\Proj}{Proj}
\DeclareMathOperator{\tr}{tr}
\renewcommand{\geq}{\geqslant}
\renewcommand{\leq}{\leqslant}
\newcommand{\Pl}{Pl\"ucker}
\newcommand{\QUOT}{\mathrm{Quot}^d_{\mathcal  F^r/  \mathbb P^1/  k}}
\newcommand{\QUOTf}{{Quot}^d_{\mathcal  F^r/  \mathbb P^1/   k}}
\newcommand{\QUOTZf}{{Quot}^t_{\mathcal  F_Y/Y  /   k}}
\newcommand{\QUOTZ}{\mathrm{Quot}^t_{\mathcal  F_Y/ Y /   k}}
\newcommand{\HILB}{\mathrm{Hilb}^d_{\mathcal O_{\PP^1}}}
\newcommand{\HILBf}{Hilb^{d}_{\mathcal O_{\PP^1}}}
\newcommand{\matrixPM}{P(M)}
\newcommand{\matrixPP}{\mathbf P}
\newcommand{\Ann}{\mathrm{Ann}}
\numberwithin{equation}{section}
\newtheorem{theorem}{Theorem}[section]
\newtheorem{corollary}[theorem]{Corollary}
\newtheorem{proposition}[theorem]{Proposition}
\newtheorem{lemma}[theorem]{Lemma}
\newtheorem{remark}[theorem]{Remark}
\theoremstyle{definition}
\newtheorem{definition}[theorem]{Definition}
\newtheorem{example}[theorem]{Example}
\newtheorem{notation}[theorem]{Notation}
\begin{document}

\title{On the Quot scheme $\mathrm{Quot}_{\mathcal O_{\mathbb P^1}^r/\mathbb P^1/k}^d$}
\author[C. Bertone]{Cristina Bertone}
\author[S. L. Kleiman]{Steven L. Kleiman}
\author[M. Roggero]{Margherita Roggero}

\address{Cristina Bertone \and Margherita Roggero\\ Dipartimento di Matematica, Universit\`a degli Studi di Torino \\ Via Carlo Alberto 10 \\ 10123 Torino \\ Italy}
\email{\href{mailto:cristina.bertone@unito.it}{cristina.bertone@unito.it}, \href{mailto:margherita.roggero@unito.it}{margherita.roggero@unito.it}}

\address{Steven L. Kleiman\\ Dept. of Math., 2-172 MIT, 77 Mass. Ave., Cambridge, MA 02139, USA }
\email{\href{mailto:kleiman@math.mit.edu}{Kleiman@math.MIT.edu }}

\begin{abstract}
 We consider the quot scheme $\QUOT$  of locally free    quotients  of   $\mathcal F^r:= \bigoplus ^{ r} \mathcal O_{\mathbb P^1 }$ with  Hilbert polynomial $p(t)=d$.   We prove that it is a smooth variety of dimension $dr$, locally isomorphic to $\AA^{dr}$.
We   introduce a new notion of support for modules in $\QUOT$, called  {\it Hilb-support}   that allows us  to    define a natural surjective morphism of schemes $\xi  :\QUOT \to \HILB $   associating  to each module  its Hilb-support and study the  fibres  of $\xi$ over each $k$-point $Z$ of $\HILB$. If $Z=Y_1+\dots+Y_n$, with $Y_j=t_jR_j$, where $R_1, \dots, R_n$ are distinct points, the fibre of $\xi$ over $Z$ is isomorphic to   $\mathrm{Quot}^{t_1}_{\mathcal  F\otimes \mathcal O_{Y_1}/ Y_1/   k}\times\dots  \times \mathrm{Quot}^{t_n}_{\mathcal  F\otimes \mathcal O_{Y_n}/ Y_n/   k}$. 
  We then study the  Quot scheme $\mathrm{Quot}^{t}_{\mathcal  F^r\otimes \mathcal O_{Y}/ Y/   k}$  with  $Y=tR$. For $t=1$, $\QUOTZ$ is isomorphic to $\PP^{r-1}$, while  for $t\geq 2$ we prove that    it is formed by  a main irreducible, reduced and singular component of dimension $t(r-1)$ and by some embedded component of lower dimension.
\end{abstract}

\subjclass[2010]{14C05, 14A15}
\keywords{Quot scheme, Hilbert scheme, support}

\maketitle
\section{Introduction}

%

Let $k$ be an  algebraically closed field of any characteristic. 
We fix the integers $r\geq 2$ and $d\geq 1$ and  set  $S:=k[x,y]$,  $\PP^1:=\Proj(S)$, 
$F^r := \bigoplus^r S$ and   $\mathcal F^r := \bigoplus^r \mathcal O_{\PP^1}$.

In the present paper we investigate the Quot scheme  parameterizing locally free quotient modules of $F^r$ having Hilbert polynomial $d$.
We refer to Grothendieck \cite{Gr1961} for the definition and construction  of the Quot scheme $\QUOT$. 
 Grothendieck's  construction of the Quot scheme 
yields $\mathrm{Quot}^d_{\mathcal F^r/\mathbb P^1/k}$ as a closed subscheme of the Grassmannian $\mathbb G_k(d, (d+1)r)$. For a more detailed treatment, see  \cite{nitsure}.

In the following, for sake of simplicity, we will identify points of $\QUOT$    with the corresponding quotient modules and points of $\HILB$ with the corresponding subschemes of $\PP^1$.

The only completely already known   case is the one  with $d=1$:  in \cite[Proposition 2.2]{kle} S. L. Kleiman proved that $\mathrm{Quot}^1_{\mathcal  F^r/  \mathbb P^1/   k}$ is isomorphic to $\PP(\mathcal F^{r})$ .

We prove that $\QUOT$ is smooth, rational of dimension $dr$ (Corollary \ref{cor:smooth}) and it is covered by Zariski open affine  subsets (Proposition \ref{star}). For each of them, we describe an explicit isomorphism to $\mathbb A^{dr}$  (Corollary \ref{cor:isom}).

Although  the above quoted results   describe   $\QUOT$ as a uniform object,    the geometry of $\QUOT$ becomes   more entangled as soon as one takes into account the algebraic structure of the modules corresponding to its points.

A first aspect that makes  some 
points of $\QUOT$  substantially different from   others  is a significant diversity between their schematic supports:    for instance,  a module 
whose schematic support is made  of simple points is certainly not isomorphic to one whose schematic support is a multiple structure over a point. In order to deepen  this observation and exploit it to study $\QUOT$ , we   define  a natural surjective map $\xi  :\QUOT \to \HILB $. The starting idea was indeed to associate to a module its schematic support.  
 However, it is easy to find modules in $\QUOT$ whose schematic support has length $<d$. This is why we introduce the notion of {\it Hilb-support} for the points of $\QUOT$ (Definition \ref{def:hilbsupp}), and use this notion to define the map $\xi$ (Definition \ref{def:xi}).

We then investigate the fibers of $\xi$, by studying Quot schemes $\QUOTZ:=\mathrm {Quot}^t_{\mathcal  F\otimes \mathcal O_Y/ Y /   k}$ over a $0$-dimensional scheme  $Y$  in $\PP^1$,  with a special attention to the case $Y=tR$, $R\in \mathbb P^1$, the one with the main  interest for the description of  the fibers of $\xi$.   For $t=1$ the fibre is isomorphic to $\PP^{r-1}$, while for
 $t\geq 2$, we prove that 
 $\QUOTZ$ is  a scheme formed    by a main component  of dimension $t(r-1)$, which is irreducible and reduced but singular, and  by embedded components of lower dimension   (Theorem \ref{thm:sing}).

 Exploiting the above result we obtain 
the main  result of the paper:
\begin{theorem}  \label{thm:main}
Let $Y=Y_1+ \dots +Y_n\in \HILB$ with $Y_j=t_jR_j$ and      $R_1, \dots, R_n$ distinct points.  The fiber of $\xi$ over $Y$  is  isomorphic to the product  ${Quot}^{t_1}_{\mathcal  F_{Y_1}/ Y_1/   k}\times \dots \times  {Quot}^{t_n}_{\mathcal  F_{Y_n}/ Y_n/   k} $. \\ 
If $n=d$, then   the fiber of $\xi$ over $Y$ is isomorphic to the product of $d$ copies of  $\mathbb P^{r-1}$. If $n\neq d$, then  the fiber of $\xi$ over $Y$ is irreducible and generically smooth, but has embedded components and its support  has   singular points.
\end{theorem}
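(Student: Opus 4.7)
The plan is to reduce the theorem to the local statements already established in Theorem~\ref{thm:sing} and in Kleiman's result \cite{kle} for $t=1$, via a product decomposition of the fiber.

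\textbf{Step 1: Product decomposition.} I would first establish that the fiber $\xi^{-1}(Y)$ is canonically isomorphic to $\prod_{j=1}^n {Quot}^{t_j}_{\mathcal F_{Y_j}/Y_j/k}$. The starting point is that, since $R_1,\dots,R_n$ are distinct closed points of $\PP^1$, one can choose pairwise disjoint Zariski open neighborhoods $U_j\ni R_j$, yielding a canonical splitting $\mathcal O_Y \cong \bigoplus_j \mathcal O_{Y_j}$ and hence $\mathcal F^r\otimes\mathcal O_Y \cong \bigoplus_j (\mathcal F^r\otimes\mathcal O_{Y_j})$. Any quotient $\mathcal F^r\twoheadrightarrow\mathcal G$ in $\xi^{-1}(Y)$ is supported on $Y$, factors through $\mathcal F^r\otimes\mathcal O_Y$, and therefore decomposes as $\mathcal G=\bigoplus_j\mathcal G_j$ with $\mathcal G_j$ a quotient in $\mathrm{Quot}^{t_j}_{\mathcal F_{Y_j}/Y_j/k}$; conversely, an $n$-tuple of such quotients reassembles by direct sum. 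To upgrade this bijection to a scheme-theoretic isomorphism, I would repeat the argument at the level of $A$-points: for any $k$-algebra $A$, the open cover $\{U_j\times\Spec A\}$ still separates the subschemes $Y_j\times\Spec A$ in $\PP^1_A$, so the same splitting of $\mathcal O_{Y\times\Spec A}$ holds, and the decomposition is natural in $A$. This gives an isomorphism of functors, hence of the representing schemes.

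\textbf{Step 2: The case $n=d$.} When $n=d$ every $t_j=1$ and each $Y_j=R_j$ is a reduced point. Then $\mathcal F^r\otimes\mathcal O_{Y_j}\cong k^r$ and $\mathrm{Quot}^1_{k^r/R_j/k}$ is precisely the Grassmannian of $1$-dimensional quotients of $k^r$, i.e.\ $\PP^{r-1}$; this is the special case of \cite[Proposition~2.2]{kle} already recalled in the introduction. Combined with Step 1, the fiber is $(\PP^{r-1})^d$.

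\textbf{Step 3: The case $n\neq d$.} Here some $t_j\geq 2$. By Theorem~\ref{thm:sing}, the corresponding factor ${Quot}^{t_j}_{\mathcal F_{Y_j}/Y_j/k}$ has a main irreducible reduced component of dimension $t_j(r-1)$ whose support carries singular points, together with embedded components of strictly smaller dimension; every other factor is either a $\PP^{r-1}$ (when $t_i=1$) or of the same type. The four properties claimed for the product follow by multiplicativity: over the algebraically closed field $k$, the product of irreducible schemes is irreducible (so the fiber is irreducible); the smooth locus of a product contains the product of the smooth loci of the factors, which is dense in each factor, so the product is generically smooth; the singular locus of a product of reduced $k$-varieties contains the locus where at least one coordinate lies in the singular locus of its factor, so the support of the fiber has singular points; and since associated primes of $A\otimes_k B$ for finitely generated $k$-algebras are built from the associated primes of $A$ and of $B$ (as $k$ is algebraically closed), any embedded prime of a factor contributes an embedded prime to the product, so the fiber has embedded components.

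\textbf{Expected obstacle.} The main delicate point is Step~1: making the set-theoretic product decomposition functorial, in particular ruling out any ``infinitesimal mixing'' of the $Y_j$ over a non-reduced base $\Spec A$. This reduces to checking that the disjoint neighborhoods $U_j$ still separate the subschemes $Y_j\times\Spec A$ for arbitrary $A$, which is immediate but must be spelled out. A secondary subtlety is the embedded-component assertion for the product in Step~3, where one has to use that associated primes behave well under tensor product over an algebraically closed base field.
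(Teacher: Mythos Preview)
Your proposal is correct and follows the same strategy as the paper: the paper's proof is the single line ``It is sufficient to sum up the results of Proposition~\ref{prop:fibers} and Theorem~\ref{thm:sing}'', and your Step~1 is precisely a (more detailed) proof of Proposition~\ref{prop:fibers}, while Steps~2--3 spell out the combination with Theorem~\ref{thm:sing} and the $t=1$ case that the paper leaves implicit. Your extra care in Step~3 about how irreducibility, generic smoothness, embedded primes, and singularities of the support pass to a product over an algebraically closed field is exactly what is needed to turn the paper's terse reference into a complete argument.
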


Several statements of the present paper  have a local nature, hence in the proofs we can restrict ourselves to an open subset. We choose a   open cover made of  subsets of $\QUOT$ given by the non-vanishing of suitable \Pl\ coordinates.  On each of these open subsets,   a  module $M$ of $\QUOT$ is characterized by a $d\times d$ matrix $\matrixPM$  that encodes its structure of $k[x,y]$-module (Proposition \ref{basisx}). This matrix $\matrixPM$ is central in all the present paper.   For instance,  the definition of Hilb-support of a module in $\QUOT$, and hence the definition of $\xi$, rely on the characteristic polynomial of this matrix $\matrixPM$.   Furthermore,  up to a linear change of coordinates in $\PP^1$,  $\QUOTZ$  with $Y=tR$ can be locally described by the condition $\matrixPM^t=0$;  we prove Theorem \ref{thm:sing}   by the consequent  local description of  $\QUOTZ$    as a scheme of nilpotent matrices, hence a subscheme of the so-called {\it null-cone} (see for some details  Lemma \ref{varw} and Example \ref{ex:[1,..,1]}).


The arguments we just outlined are developed below not only for
$k$-points, but for families, as is required for working with the
functor of points of the Quot scheme.

The starting point for the results of the present paper are the computational results exposed in \cite{ABRS}: although here we do not directly use those methods, the choice of a  suitable  open cover and the explicit computation of  local and global equations for   the Quot scheme with   $t=d=2$ gave us inspiration and conjectures for the more general case that we  prove in the present paper.

\section{Notation and general setting}\label{ann}

Let $k$ be an  algebraically closed field of any characteristic.   In the following $A$ will always denote a  $k$-algebra.

We fix the integers $r\geq 2$ and $d>0$ and  set  $S_A:=A[x,y]$,  $\PP^1_A:=\Proj(S_A)$, 
$F^r_A := \bigoplus^r S_A$ and   $\mathcal F^r_{A} := \bigoplus^r \mathcal O_{\PP^1_{ A }}$.
If $A=k $, we will simply write $S$, $\PP^1$,      $F^r$, $\mathcal F^r$. 

We will denote by $\mathbb G$  the Grassmannian  $\mathbb G_k (d,(d+1)r)$ and by $G$ its functor of points.

We refer to Grothendieck \cite{Gr1961} for the definition of the Quot scheme $\QUOT$ and the Quot functor $\QUOTf$. 
The Gotzmann number of 
the Hilbert polynomial $p(t)=d$ is $d$ \cite{Dellaca}, hence every  saturated quotient module $F^r_A/V\in \QUOTf{(A)}$ is uniquely determined by its  homogeneous component of degree
 $d$, namely  by $(F_A^r/V)_d=(\bigoplus^r S_{A,d})/V_{d}$. 
 
In this way we obtain  the standard embedding  of $\QUOT$ in  $\mathbb G$ and then in $\PP^{N\!(r)}$  where $N\!(r)={(d+1)r\choose d}-1$  \cite[Section 7]{ABRS}.     Taking in consideration this embedding, in the following we often identify every quotient module $F_A^r/V$ in   $  \QUOTf{(A)}$   with its degree $d$ homogeneous component, namely by the  rank $d$ locally free $A$-module   $M:=F_{A,d}/V_d$.

 We recall that the support of a $B$-module $D$ is the subscheme of $\Spec (B)$ defined by the ideal $\Ann(D):=\{b \in B $ such that $bD=0\}$. If $A$ is a $k$-algebra and $F_A^r/V\in 
\QUOTf(A)$, then  
   $\Ann_{S_A} (F_A^r/V) \cap A=(0)$ and $\Ann_{S_A} (F_A^r/V) \not \supset (x,y)$, because $(F_A^r/V)_m$ is a locally free module of rank $d$ for every $m\geq d$. Therefore, the  support of 
$M=(F_A^r/V)_d$  is a closed subscheme of $\PP^1_A$ of length at most $d$.

Let $e_1, \dots ,  e_r$ be the standard basis of $F_A^r$ as an $S_A$-module.
We choose the following ordered list 
$$\mathcal B:=[x^d e_1,x^{d-1} y e_1,\dots, y^d e_1, \dots ,x^d e_r,x^{d-1} y e_r,\dots, y^d  e_r]$$ as a basis for 
 $\bigoplus^r S_{A,d}$ as an $A$-module; $\mathcal B_j$ will denote the $j$-th element of this list.

Every free  (not simply locally free)  quotient  module  $M\in  G(A)$   can be represented by a $d\times (d+1)r$ matrix $\mathbf C(M)$, where the rows correspond to the choice of a $A$-basis $u_1,\dots, u_d$ for 
 $M$. 
 Let us denote by  $C_i=(c_{1,i}\dots c_{d,i})^T, i=1,\dots, (d+1)r$, the columns of this matrix; then 
 $$\mathbf C(M)=[C_1 \ C_2 \dots \ C_{d+1}\ \vert\  \dots  \  \vert \ C_{(d+1)r-d}\ C_{(d+1)r-d+1}\ \dots \ C_{(d+1)r}].$$

Considering  the  \Pl\ embedding  of $\mathbb G$    in $\PP^{N\!(r)}=\Proj(\ZZ[\Delta_{j_1,\dots,j_d}]_{1\leq j_1<\cdots <j_d\leq (d+1)r})$, 
 the minors of  $\mathbf C(M)$  are the \Pl\ coordinates of $M$:  for every $1\leq j_1<\cdots <j_d\leq (d+1)r$,  we set  
$\Delta_{j_1,\dots,j_d}(M)=\det(C_{j_1}\dots \ C_{j_d})$.

We will call the $v$-th block of $\mathcal B$ (resp.  of $\mathbf C(M)$) the subset (resp. submatrix)  with indices  from  $(d+1)v-d$ to $ (d+1)v$, namely those corresponding to $e_{v}$.
When we want  to emphasize the  division in blocks of the matrix $\mathbf C(M)$  we will write it as 
   $$\mathbf C(M)=[B_{1,1} \ B_{1,2} \dots \ B_{1,d+1}\ \vert\  \dots  \  \vert \ B_{r,1} \ B_{r,2} \dots \ B_{r,d+1}],$$
so that $B_{\ell ,j}$is the $j$-th column of the $\ell $-th block.

If $A$ is not a local ring, then not every module $M \in G(A)$ can
be represented by such a matrix; indeed, $M$ must be free (in general,
$M$ is only locally free). For this reason, it is useful to work on open
sets that cover $\mathbb G$ and on which every $M$ is free.

\section{A suitable open cover of $\mathbb G$ and of $\QUOT$}

 For every  $i,j$ with $1\leq j_1<\cdots <j_d\leq (d+1)r$,  we will denote by $ \mathbb G_{j_1,\dots,j_d}$ the open subscheme of $\mathbb G$ where the \Pl\ coordinates  $\Delta_{j_1,\dots,j_d}$ is invertible and by $G_{j_1,\dots,j_d}$ the corresponding open subfunctor of $G$.

 Similarly,  we will denote 
 by $\mathbb Q_{j_1,\dots,j_d}$ the open subscheme $\mathbb G_{j_1,\dots,j_d}\cap \QUOT$ of $\QUOT$ and by 
$Q_{j_1,\dots,j_d}$  the corresponding open subfunctor of $\QUOTf$.

 Every   $A$-module $M\in G_{j_1,\dots,j_d}(A)$  is free with basis   $\mathcal B_{j_1}, \dots, \mathcal B_{j_d}$.  Choosing this basis as $u_1, \dots, u_d$,  the submatrix of $\mathbf C(M)$  corresponding to columns ${j_1,\dots,j_d}$ is the identity matrix. 
 
  In particular, we consider a list of positive integers $[i_1,\dots i_s]$ with $i_1\geq \cdots\geq i_s>0$ and $\sum i_m=d$ and  denote by $\mathbb G_{[i_1,\dots,i_s]}$ and $\mathbb Q_{[i_1,\dots,i_s]}$  the open subsets  obtained   selecting as  ${j_1,\dots,j_d}$ the first $i_m$ indices  of  the $m$-th block for every  $m=1, \dots, s$. 
 \begin{example}
  Consider $r=3$, $d=4$. If we choose $[i_1,i_2]=[2,2]$, this corresponds to $j_1=1, j_2=2, j_3=6, j_4=7$.
  \end{example}
The  $N\!(r)$ open subschemes $ \mathbb G_{j_1,\dots,j_d}$ form the standard open cover of $ \mathbb G$ and none of them is contained in the 
union of the other ones.  For a general introduction to such open covers see \cite[Section 9.8]{GD}.  
 The $ \mathbb G_{j_1,\dots,j_d}$ are all isomorphic, but
the $\mathbb Q_{[i_1,\dots,i_s]}$ are not. This is due to the fact that  the modules $M$ corresponding to the points of $\mathbb Q_{[i_1,\dots,i_s]}$ have a specific module structure related to $[i_1,\dots,i_s]$. 

So we will consider transformations by suitable changes of coordinates
in $\PP^1$ and suitable change of the basis $\mathcal B$,
transformations that preserve the scheme structure of the $\QUOT$ and
that take quotient modules representing points of $\QUOT$ to isomorphic
quotients.

 Below, the variables $x,y$ and the basis $e_1,\dots, e_r$ are changed
via matrices in $\mathrm{GL}_2(k)$ and $\mathrm{GL}_r(k)$.
 
\begin{notation}
 We will denote by $\mathbb G_{j_1,\dots,j_d} ^\ast$ and $\mathbb
G_{[i_1,\dots,i_s]}^\ast$ (resp., $\mathbb Q_{j_1,\dots,j_d}^\ast$ and
$\mathbb Q_{[i_1,\dots,i_s]}^\ast $) the union of the $\mathbb
G_{j_1,\dots,j_d} $ and $\mathbb G_{[i_1,\dots,i_s]}$ (resp., $\mathbb
Q_{j_1,\dots,j_d}$ and $\mathbb Q_{[i_1,\dots,i_s]}$), up to the linear
changes described above.

Moreover, we will denote by $G^\ast$ and $Q^\ast$ the union of the open subfunctors
 $ Q_{[i_1,\dots,i_s]}  $ and $ G_{[i_1,\dots,i_s]}$, for every  sequence $[i_1,\dots,i_s]$ and up to the above linear changes.
\end{notation}

\begin{remark} Observe that   $Q^\ast(A)=\QUOTf(A)$ for every  local $k$-algebra $A$.
\end{remark}

\begin{proposition} \label{star}
In the above setting, we have 

\[\QUOT= \bigcup_{i_1\geq \cdots\geq i_s>0, \sum i_m=d} \mathbb Q_{[i_1,\dots,i_s]} ^\ast .\]
\end{proposition}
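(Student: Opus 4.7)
The inclusion $\supseteq$ is immediate. For $\subseteq$, I use that $\QUOT$ is of finite type over the algebraically closed field $k$ and hence Jacobson: an open subscheme of $\QUOT$ containing every closed point equals $\QUOT$. Thus it suffices to check that every closed point $M$ of $\QUOT$ lies in some $\mathbb Q_{[i_1,\ldots,i_s]}^\ast$.

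Fix such an $M=F^r/V$, identified with $M_d:=F^r_d/V_d\in\mathbb G$. Its support in $\PP^1$ is a finite set; since $k$ is infinite, a $\mathrm{GL}_2(k)$-change of coordinates on $\PP^1$ moves a point outside the support to $[1:0]$, making $x$ a non-zero-divisor on $M$. Then $x\colon M_d\to M_{d+1}$ is an injection between $d$-dimensional $k$-vector spaces, hence an isomorphism, and we can define $\psi:=x^{-1}y\colon M_d\to M_d$. Writing $\tilde e_m:=x^de_m\in M_d$, one checks that the $j$-th element of the $m$-th block of $\mathcal B$ reduces modulo $V_d$ to $\psi^{j-1}\tilde e_m$, and that $\tilde e_1,\ldots,\tilde e_r$ generate $M_d$ as a $k[\psi]$-module. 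The task reduces to finding a $\mathrm{GL}_r(k)$-change $(e_m)\mapsto(e'_m)$ such that $\{\psi^{j-1}\tilde e'_m : 1\leq j\leq i_m,\ 1\leq m\leq s\}$ is a $k$-basis of $M_d$ for some partition $i_1\geq\cdots\geq i_s>0$ of $d$.

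As a finitely generated torsion module over the PID $k[\psi]$, $M_d$ has an invariant-factor decomposition $M_d\cong\bigoplus_{m=1}^s k[\psi]/(g_m)$ with $g_s\mid\cdots\mid g_1$ and $s\leq r$ (the latter because $M_d$ is generated by the $r$ elements $\tilde e_m$, and $s$ is the minimum number of generators). Setting $i_m:=\deg g_m$ yields the candidate partition. Writing $W:=\mathrm{span}_k(\tilde e_1,\ldots,\tilde e_r)\subseteq M_d$, I build $v_1,\ldots,v_s\in W$ by induction on $s$: find $v_1\in W$ with $\dim_k k[\psi]v_1=i_1$; use that the annihilator of $v_1$ equals the minimal polynomial $g_1$ of $\psi$ on $M_d$, so that $k[\psi]v_1$ is a direct summand (a classical fact for modules over PIDs), and the quotient has invariant factors $g_s\mid\cdots\mid g_2$; apply the inductive hypothesis to the image $\bar W$ of $W$ in the quotient, and lift. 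Standard linear algebra then shows that $\{\psi^{j-1}v_m\}$ is a basis of $M_d$ and that $v_1,\ldots,v_s$ are linearly independent in $W$. Since $W$ is the image of the natural map $k^r\to M_d$ sending the standard basis to the $\tilde e_n$, lifts of $v_1,\ldots,v_s$ can be chosen linearly independent in $k^r$ and extended to a basis of $k^r$, yielding the required element of $\mathrm{GL}_r(k)$. The combined $\mathrm{GL}_2(k)\times\mathrm{GL}_r(k)$ change then places $M$ inside $\mathbb Q_{[i_1,\ldots,i_s]}$, so $M\in\mathbb Q_{[i_1,\ldots,i_s]}^\ast$.

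The only subtle step is producing $v_1\in W$ that realises the maximal cyclic dimension $i_1$. The locus $\{v\in M_d:\dim_k k[\psi]v<i_1\}$ equals the union of the proper subspaces $\ker h(\psi)$ over the proper monic divisors $h$ of $g_1$, a \emph{finite} union. If $W$ were contained in this union, the vector-space avoidance lemma (valid because $k$ is infinite) would force $W\subseteq\ker h(\psi)$ for a single $h$; but then $h(\psi)$ would annihilate $W$, hence all of $M_d$ (since $W$ generates $M_d$ as a $k[\psi]$-module), forcing $g_1\mid h$ and contradicting that $h$ is a proper divisor of $g_1$. This avoidance argument is the crux; the remainder of the proof is bookkeeping within the invariant-factor decomposition.
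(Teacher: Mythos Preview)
Your proof is correct, and the overall skeleton (reduce to $k$-points, make $x$ a non-zero-divisor via a $\mathrm{GL}_2(k)$-change, then exhibit an appropriate basis of $M_d$) matches the paper's. The route you take to produce the basis, however, is genuinely different.

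The paper's argument is more elementary and uses only \emph{permutations} of $e_1,\dots,e_r$ rather than general $\mathrm{GL}_r(k)$-changes. It proceeds greedily: take $L_1=k[\psi]\,\tilde e_1$ and set $i_1=\dim L_1$; since $L_1$ is $\psi$-stable, the first $i_1$ elements of block~$1$ already form a basis of $L_1$; then pick the next $e_m$ that enlarges the span, set $i_2$ to be the increment, and so on, finally reordering the chosen $e_m$ so that the $i_m$ are non-increasing. No structure theorem and no avoidance lemma are needed. By contrast, you invoke the invariant-factor decomposition of $M_d$ over the PID $k[\psi]$ and a prime-avoidance-style argument to locate $v_1\in W$ realising the maximal cyclic length, then induct. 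Your approach is more structural and has the pleasant feature that the resulting partition $[i_1,\dots,i_s]$ is canonical (the invariant factors of $\psi$), whereas the paper's greedy procedure can land $M$ in a chart with a different partition depending on the initial ordering of the $e_m$. The price you pay is the extra machinery; the paper's argument is shorter and shows that in fact permutations of the $e_m$ (together with the $\mathrm{GL}_2(k)$-change) already suffice.
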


\begin{proof} As it suffices to check the asserted equation locally, we consider a local $k$-algebra $(A, \mathfrak m, K)$.
It is sufficient to prove that for every $M\in \QUOTf(A)$, there is $[i_1,\dots,i_s]$ such that $M\in  Q_{[i_1,\dots,i_s]} ^\ast(A)$. Since we will look for invertible submatrices of $\mathbf C(M)$, we can work modulo $\mathfrak m$, namely take $A=K$ a field.

  Up to a change of coordinates in $\mathrm{GL}_2(k)$, we may assume that $M$ is supported on the open subset $\mathbb A^1_K$ of $\PP^1_K$  where $x$ does not vanish, so that $xM \hookrightarrow M$ .

 Then, after reordering $e_1,\dots,e_r$, we may  assume that  in the first block of $\mathcal B$  there is an element $x^{d-s}y^s e_1$ that does not vanish  in $M$, hence  we may also assume that $x^de_1$ does not vanish in $M$ (recall that in our setting $x$ is not a zero-divisor).  
Let $L_1$ be the $K$-vector space in $M$   generated by  $x^de_1,x^{d-1} ye _1,\dots, y^d e_1$, and let $i_1$ be its dimension;  by applying the same argument as above, we may choose    $x^de_1,x^{d-1} ye _1,\dots, x^{d-i_1+1}y^{i_1-1} e_1$ as its basis. Indeed, if $x^{d-s}y^{s} e_1$ is linearly dependent on $x^d e_1, \dots , x^{d-s}y^{s} e_1$, then the same holds for $x^{d-t}y^{t} e_1$ for every $t\geq s$.

 If $i_1=d$, then $M\in Q_{[d]}(K)$. Otherwise, after reordering $e_2,\dots,e_r$ we may  assume that   the $K$-vector  space  $L_2\subset M$    generated by   the elements of the first and second block of $\mathcal B$  has dimension $i_1+i_2$ larger than $i_1$ and see that $x^de_1,\dots, x^{d-i_1+1}y^{i_1-1} e_1, x^de_2, \dots, x^{d-i_2+1}y^{i_2-1} e_2$ is its basis. Up to reordering $e_1,\dots,e_s$, we obtain $i_1\geq \cdots \geq i_s>0$. Finally, we take as  basis $u_1,\dots, u_d$ of $M$ exactly the elements $x^{d}e_1, x^{d-1}y e_1,\dots, x^{d-i_1+1}y^{i_1-1} e_1, \dots, x^{d}e_s, \dots,  x^{d-i_s+1}y^{i_s-1} e_s$. Thanks to these choices, $\mathbf C(M)$ has an identity submatrix, that we will denote by  $\mathrm{Id}_{[i_1,\dots,i_s]} $,  obtained by taking the first $i_m$ columns from the $m$-th block for $m=1, \dots, s$.
\end{proof}

\begin{corollary}
A local property holds everywhere on 
 $\QUOT$ if, and only if,  it holds on the open subsets  $  \mathbb Q_{[i_1,\dots,i_s]}  $, for every $[i_1,\dots,i_s]$ such that $i_1\geq \cdots\geq i_s>0$, $\sum i_m=d$.
\end{corollary}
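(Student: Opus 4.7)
The plan is to deduce the corollary from Proposition \ref{star} together with the observation that the linear changes of coordinates used to pass from $\mathbb Q_{[i_1,\dots,i_s]}$ to $\mathbb Q_{[i_1,\dots,i_s]}^\ast$ are induced by automorphisms of $\QUOT$. The ``only if'' direction is immediate because each $\mathbb Q_{[i_1,\dots,i_s]}$ is an open subscheme of $\QUOT$, and a local property that holds everywhere on $\QUOT$ restricts to any open subscheme.

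For the ``if'' direction, I would first observe that the group $\mathrm{GL}_2(k)\times \mathrm{GL}_r(k)$ acts on $\QUOT$: a matrix in $\mathrm{GL}_2(k)$ induces a change of coordinates on $\PP^1$, hence an automorphism of $\PP^1$, and a matrix in $\mathrm{GL}_r(k)$ acts on the free module $\mathcal F^r$ by a change of basis $e_1,\dots,e_r$, inducing an automorphism of $\mathcal F^r$. Together they induce an automorphism of $\QUOT$ that carries the open subset $\mathbb Q_{[i_1,\dots,i_s]}$ to one of its translates in the union defining $\mathbb Q_{[i_1,\dots,i_s]}^\ast$. Since automorphisms preserve any property of local nature (such as smoothness, reducedness, regularity, flatness over a base, and so on), if a local property holds on $\mathbb Q_{[i_1,\dots,i_s]}$, then it holds on every translate of it, and therefore on the whole union $\mathbb Q_{[i_1,\dots,i_s]}^\ast$.

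Combining this with Proposition \ref{star}, which gives the covering
\[
\QUOT= \bigcup_{i_1\geq \cdots\geq i_s>0,\ \sum i_m=d} \mathbb Q_{[i_1,\dots,i_s]}^\ast,
\]
the property then holds on an open cover of $\QUOT$, and by the definition of a local property it holds everywhere on $\QUOT$.

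I do not expect a real obstacle here: the substance of the statement lies entirely in Proposition \ref{star}, and all that remains is the observation that the linear changes of coordinates are honest automorphisms of the scheme $\QUOT$, so the finite collection $\{\mathbb Q_{[i_1,\dots,i_s]}\}$ is representative of the cover $\{\mathbb Q_{[i_1,\dots,i_s]}^\ast\}$ for the purpose of checking any property that is stable under isomorphism and of local nature. The only minor point worth making explicit in the write-up is the precise meaning of ``local property'' — that it is Zariski-local and invariant under isomorphism — so that the reduction from $\mathbb Q_{[i_1,\dots,i_s]}^\ast$ to $\mathbb Q_{[i_1,\dots,i_s]}$ is legitimate.
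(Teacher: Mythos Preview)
Your proposal is correct and matches the paper's approach: the paper states this corollary immediately after Proposition~\ref{star} with no proof, treating it as an evident consequence of that covering together with the (already-noted) fact that the linear changes in $\mathrm{GL}_2(k)\times\mathrm{GL}_r(k)$ act by automorphisms of $\QUOT$. You have simply made explicit the two-line reasoning the authors left to the reader.
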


\section{An explicit description of   $\mathbb Q_{[i_1,\dots,i_s]}$}

Now we give an explicit characterization of the the elements of  $  Q_{[i_1,\dots,i_s]}(A)$.

\begin{proposition}\label{basisx} Let $A$ be a $k$-algebra. 
 Let $M$  be a free   $A$-module   of  $G(A)$ and  $F_A^r/V$ the  $S_A$-module generated by $M$.
 Then 
$M\in \QUOTf(A) $ if,  and only if, 
 $\exists $   $u_1, \dots, u_d\in M$,    $\ell,\ell' \in    k[x,y] _1$, and a  $d\times d$ matrix  $\matrixPM$  with entries in $A$ such that   
\begin{enumerate}[\quad \rm 1)]
\item\label{basis1}  $ u_1,\dots, u_d$ are  $A$-basis for   $(F_A^r/V)_d$;
 \item\label{basis2}   $\ell, \ell' $ are a basis for the linear forms in $  k[x,y] $, such  that $\ell u_1,\dots, \ell u_d$ are  $A$-basis  of    $(F_A^r/V)_{d+1}$;
\item  \label{basis3} $ \left[\  \ell' u_1  \cdots   \ell'u_d\  \right] =\left[\  \ell u_1  \cdots \ell u_d\  \right] \matrixPM$.
\end{enumerate}

If the above conditions are fulfilled, then  multiplication by $\ell\colon (F_A^r/V)_{\geq d}\rightarrow (F_A^r/V)_{\geq d+1} $ is injective and $\ell^{m} u_1, \dots, \ell^{m} u_d$  is  $A$-basis of $(F_{A}^r/V)_{m+d}$   for  every $m\geq 1$ .
\end{proposition}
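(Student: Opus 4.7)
The statement is an equivalence with an additional assertion, so I would separate the two directions and the concluding observation.

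\textit{Forward direction.} Assume $M\in\QUOTf(A)$. Since the Gotzmann number of $p(t)=d$ equals $d$, the module $F_A^r/V$ is generated in degree $d$ and each $(F_A^r/V)_{d+m}$ is locally free of rank $d$ over $A$ for $m\ge 0$. I would take any $A$-basis $u_1,\dots,u_d$ of the free module $M$, giving condition (1). For condition (2), I would exhibit a $k$-linear form $\ell$ such that multiplication by $\ell$ is an $A$-isomorphism $M\to(F_A^r/V)_{d+1}$; since both sides are free of rank $d$, this amounts to requiring that $\ell$ avoid the fibres of the (relative length $d$) support, which is arranged by restricting to the open cover $\mathbb Q^\ast_{[i_1,\dots,i_s]}$ introduced in the preceding section. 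Completing $\ell$ to a basis $\{\ell,\ell'\}$ of $k[x,y]_1$, condition (3) is obtained by uniquely expanding each $\ell' u_i$ in the basis $\{\ell u_j\}$, which defines the matrix $P(M)$.

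\textit{Backward direction.} Assume conditions (1)--(3). By induction on $m\ge 0$, I would prove that $(F_A^r/V)_{d+m}$ is $A$-free of rank $d$ with basis $\ell^m u_1,\dots,\ell^m u_d$; this simultaneously yields $M\in\QUOTf(A)$ and the final assertion about multiplication by $\ell$. The induction splits in two steps. First, \emph{spanning}: since $\{\ell,\ell'\}$ spans $k[x,y]_1$, $(F_A^r/V)_{d+m+1}$ is $A$-generated by $\ell\cdot(F_A^r/V)_{d+m}$ together with $\ell'\cdot(F_A^r/V)_{d+m}$; by the induction hypothesis and condition (3), these equal $\ell^{m+1}u_i$ and $\ell'\ell^m u_i=\ell^m(\ell' u_i)=\sum_j P_{ji}\,\ell^{m+1}u_j$, both lying in the claimed span. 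Second, \emph{linear independence}: I would argue fibrewise. For each prime $\mathfrak p\subset A$ with residue field $\kappa$, the base change $F_\kappa^r/V_\kappa$ is a graded $\kappa[x,y]$-module whose Hilbert function equals $d$ in both degrees $d$ and $d+1$ (conditions (1) and (2) base-changed). Gotzmann's persistence theorem, applicable precisely because $d$ is the Gotzmann number of $p(t)=d$, then forces the Hilbert function to equal $d$ in every degree $\ge d$. So $(F_A^r/V)_{d+m}\otimes_A\kappa$ is exactly $d$-dimensional, making the $\kappa$-reductions of $\ell^m u_1,\dots,\ell^m u_d$ a $\kappa$-basis. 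Combined with the spanning step and Nakayama's lemma, the surjection $A^d\twoheadrightarrow(F_A^r/V)_{d+m}$ sending $e_i\mapsto\ell^m u_i$ is an $A$-isomorphism.

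\textit{The concluding observation.} With $\ell^m u_1,\dots,\ell^m u_d$ an $A$-basis of $(F_A^r/V)_{d+m}$, multiplication by $\ell$ sends this basis bijectively onto the basis $\ell^{m+1}u_1,\dots,\ell^{m+1}u_d$, yielding an $A$-isomorphism in each degree $\ge d$; in particular, multiplication by $\ell\colon(F_A^r/V)_{\ge d}\to(F_A^r/V)_{\ge d+1}$ is injective, and the basis statement is exactly the final formula.

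\textit{Main obstacle.} The technical crux is the linear-independence step, where Gotzmann's persistence promotes the degree-$d$ and $(d+1)$ Hilbert information to all higher degrees, and Nakayama then converts the resulting fibrewise freeness into global $A$-freeness. The uniform existence of $\ell$ in the forward direction is the other delicate point, but is handled by restricting to the open cover from the preceding section.
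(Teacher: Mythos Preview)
Your forward direction is essentially the paper's: both reduce to the open sets $Q_{[i_1,\dots,i_s]}$ via Proposition~\ref{star}, take $\ell=x$, $\ell'=y$ together with the explicit basis coming from that proposition, and read $P(M)$ off from the appropriate columns of $\mathbf C(M)$.

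The backward direction, however, has a genuine gap in the Nakayama step. From the spanning argument you obtain a surjection $A^d\twoheadrightarrow N:=(F_A^r/V)_{d+m}$, and from fibrewise Gotzmann persistence you obtain $\dim_{\kappa}\bigl(N\otimes_A\kappa\bigr)=d$ for every residue field $\kappa$. But these two facts do \emph{not} force the surjection to be an isomorphism. Tensoring the exact sequence $0\to K\to A^d\to N\to 0$ with $\kappa$ only yields
\[
K\otimes_A\kappa\longrightarrow \kappa^d\xrightarrow{\ \sim\ } N\otimes_A\kappa\longrightarrow 0,
\]
so you learn that the \emph{image} of $K\otimes_A\kappa$ in $\kappa^d$ vanishes, not that $K\otimes_A\kappa$ itself vanishes; the missing injectivity on the left is $\mathrm{Tor}_1^A(N,\kappa)=0$, i.e.\ flatness of $N$, which is precisely what you are trying to establish. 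A toy counterexample to the bare implication you are using: $A=k[\epsilon]/(\epsilon^2)$, $N=A/(\epsilon)$, $d=1$.

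The paper treats sufficiency as immediate from the setup in Section~\ref{ann}: the Gotzmann regularity cited there (for modules, and valid over the base $A$, not merely fibrewise) says that once $(F_A^r/V)_{d+1}$ is locally free of rank $d$---which is exactly condition~(2)---then $(F_A^r/V)_{d+m}$ is locally free of rank $d$ for every $m\geq 0$, so $M\in\QUOTf(A)$. Your spanning argument then upgrades the generators $\ell^m u_1,\dots,\ell^m u_d$ to a basis, giving the final assertion. If you prefer a self-contained repair that avoids invoking persistence over a general base, you can prove directly by induction on $m\geq 0$ that multiplication by $\ell$ is injective on $(F_A^r/V)_{d+m}$: if $\ell f=0$ in degree $d{+}m{+}1$, lift to $F_A^r$ and use that $\ell,\ell'$ is a regular sequence on the free $S_A$-module $F_A^r$ to write $f=\ell'\bar h$ with $\ell\bar h=0$ in degree $d{+}m$; the inductive hypothesis (anchored at $m=0$ by condition~(2)) gives $\bar h=0$, hence $f=0$.
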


\begin{proof}
It is obvious that the  conditions \ref{basis1}), \ref{basis2}), \ref{basis3}) are sufficient to ensure that   $M$ belongs to $ \QUOTf(A)$:
 indeed, these conditions ensure that $F_A^r/V$ is a free $A$-module with Hilbert polynomial $d$.

 Now we prove that the three conditions are also necessary, 
namely that all the modules $M\in \QUOTf(A)$ fullfil them.

By Proposition \ref{star}, it is sufficient to consider 
$ M\in Q_{[i_1,\dots,i_s]}(A)$ for any $[i_1,\dots,i_s]$.

Choose $u_1,\dots, u_d$ as in the proof of Proposition \ref{star}, so that the matrix $\mathbf C(M)$ has an identity submatrix  $\mathrm{Id}_{[i_1,\dots,i_s]} $
 and   take as  $\matrixPM$ the only matrix such that $ \left[\ y u_1  \cdots   yu_d\  \right] =\left[\  x u_1  \cdots x u_d\  \right] \matrixPM$. More explicitely, $\matrixPM$ is constructed  by taking  in the $m$-th block   the columns $2, \dots, i_m+1$, for every $m\leq s$, and the first column of each block from $s+1$ on.   
 In this way, clearly $\matrixPM$ gives the mutiplication by  $y/x$ of each  element of the basis $xu_1, \dots, xu_d$  of  $(F_A^r/V)_{\geq d+1} $.

We conclude the proof observing that the arguments we applied to $(F_A^r/V)_{d+1}$,  also  apply inductively to $(F_A^r/V)_t$ for every $t\geq d+1$.
\end{proof}

We now give few explicit examples for Proposition \ref{basisx}.
\begin{example} \label{ex:casot=5}
We consider $d=5$, $r\geq d$. \\
Assume $M\in Q_{[5]}(A)$: we can take  $x^5e_1,x^{4} ye _1,\dots, xy^{4} e_1$ as a basis of $M$ (as in the proof of Proposition \ref{star}), so that the first block $B_1$ of $\mathbf C(M)$ contains in its first 5 columns an identity matrix. We choose $\ell=x$, $\ell'=y$, and the matrix $\matrixPM$ (multiplication by $y$) is
\[
\matrixPM=\left [
\begin{array}{ccccc}
 0 & 0 & 0 & 0&  b_{1,6} \\
 1 & 0 & 0 & 0 & b_{2,6}  \\ 
 0 & 1 & 0 & 0 & b_{3,6} \\ 
0 & 0 & 1 & 0 &  b_{4,6} \\ 
 0 & 0 & 0 & 1 & b_{5,6} 
\end{array}
\right],
\]
where $[b_{1,6}, b_{2,6}, b_{3,6}, b_{4,6}, b_{5,6}]^T=B_{1,6}^T$ is the last column of the first block of $\mathbf C(M)$.\\
Assume now $M\in Q_{[3,2]}(A)$: we can take  $x^5e_1,x^{4} ye _1,x^3 y^2 e_1, x^5 e_2,x^4 ye_2$ as a basis of $M$, so that the first block $B_1$ of $\mathbf C(M)$ starts with the first  three columns of the identity matrix, and the block $B_2$ starts with the 4-th and 5-th columns of the identity matrix.  We choose $\ell=x$, $\ell'=y$, and the matrix $\matrixPM$ (multiplication by $y$) is in this case
\[
\matrixPM=\left [
\begin{array}{ccccc}
 0 & 0 & b_{1,4} & 0&b_{1,9}\\
 1 & 0 & b_{2,4}  & 0 & b_{2,9}\\ 
 0 & 1 &  b_{3,4}& 0 & b_{3,9}\\ 
0 & 0 &   b_{4,4} & 0& b_{4,9}\\ 
 0 & 0 &  b_{5,4} & 1 &b_{5,9}
\end{array}
\right],
\]
where $[b_{1,4}, b_{2,4}, b_{3,4}, b_{4,4}, b_{5,4}]^T=B_{1,4}^T$ and  $[b_{1,9}, b_{2,9}, b_{3,9}, b_{4,9}, b_{5,9}]^T=B_{2,3}^T$.

Assume now $M\in Q_{[1,1,1,1,1]}(A)$: we can take  $x^5e_1,x^5e_2, x^5e_3, x^5e_4, x^5e_5$ as a basis of $M$, so that the $j$-th column of the $j$-th block of $B_j$ of $\mathbf C(M)$ is the $j$-th column of the identity matrix, $j=1,\dots ,5$.  We choose $\ell=x$, $\ell'=y$, and the matrix $\matrixPM$ (multiplication by $y$) is in this case
\[
\matrixPM=\left [
\begin{array}{ccccc}
b_{1,2} & b_{1,8}&b_{1,14} & b_{1,20}& b_{1,26}\\
b_{2,2} & b_{2,8}&b_{2,14} & b_{2,20}& b_{2,26}\\
b_{3,2} & b_{3,8}&b_{3,14} & b_{3,20}& b_{3,26}\\
b_{4,2} & b_{4,8}&b_{4,14} & b_{4,20}& b_{4,26}\\
b_{5,2} & b_{5,8}&b_{5,14} & b_{5, 20}& b_{5,26}\\
\end{array}
\right],
\]
where $[b_{i,2+(\ell-1)j}]_{i=1,\dots,5}^T=B_{1,(\ell-1)j}^T$ is the 2-nd column of the $\ell$-th block of $\mathbf C(M)$, $\ell=1,\dots,5$. Observe that in this case $\matrixPM$ does not contain any column of the submatrix $\mathrm{Id}_{[1,\dots,1]}$.
\end{example}

\begin{corollary} \label{cor:isom}   For every  $[i_1,\dots,i_s]$ as above, the open subset  $\mathbb Q_{[i_1,\dots,i_s]}$ of $\QUOT$  is isomorphic to the affine space $\mathbb A^{dr} = \Spec (k[w_{h,m}]_{ h=1, \dots, d, \ m=1, \dots, r})$.

An explicit isomorphism $ \phi_{[i_1,\dots,i_s]}$  can  be obtained associating  to  $w_{1,m}, \dots, w_{d,m}$ the entries of the first  column in the $m$-th block   that does not belong to the chosen identity submatrix $\mathrm{Id}_{[i_1,\dots,i_s]}$.
\end{corollary}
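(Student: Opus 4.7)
The plan is to use Proposition \ref{basisx} to set up a bijection, natural in the test $k$-algebra $A$, between $Q_{[i_1,\ldots,i_s]}(A)$ and $A^{dr}$; by Yoneda this will give the desired isomorphism of schemes and describe $\phi_{[i_1,\ldots,i_s]}$ explicitly. For the first direction, starting from $M\in Q_{[i_1,\ldots,i_s]}(A)$, I examine the matrix $\matrixPM$. The proof of Proposition \ref{basisx} shows that the column of $\matrixPM$ at index $k=i_1+\cdots+i_{m-1}+j$ (with $1\le j\le i_m-1$, $m\le s$) is forced to be a standard basis vector coming from the shift $yu_k=xu_{k+1}$ inside block $m$; hence $\matrixPM$ has exactly $s$ free columns, one per block $m\le s$, located at index $i_1+\cdots+i_m$, and these columns coincide with the $(i_m+1)$-st column of the $m$-th block of $\mathbf{C}(M)$, contributing $sd$ free entries. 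For each remaining block $m>s$, the $S_A$-module relation $y\pi(\mathcal B_{(d+1)(m-1)+j})=x\pi(\mathcal B_{(d+1)(m-1)+j+1})$ translates into the recursion ``column $j+1$ of block $m$ equals $\matrixPM\cdot$(column $j$ of block $m$)'', so the entire block is determined by its first column, adding $d$ more free entries. Summing gives $rd$ free parameters, and they are exactly the entries of the first column in the $m$-th block that does not belong to $\mathrm{Id}_{[i_1,\ldots,i_s]}$, i.e.\ the coordinates $w_{h,m}$ of the statement.

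Conversely, given $(w_{h,m})\in A^{dr}$, I would build $\matrixPM$ by inserting the $w_{h,m}$ with $m\le s$ into its $s$ free columns (the others being the prescribed standard vectors), and then build the full matrix $\mathbf{C}$ by placing $\mathrm{Id}_{[i_1,\ldots,i_s]}$ in the selected positions, putting the $w_{h,m}$ with $m>s$ as the first column of the $m$-th block, and filling every remaining column inside a block by the recursion ``(column $j+1$)$=\matrixPM\cdot$(column $j$)''. Taking $u_1,\ldots,u_d$ to be the basis of $M=(F_A^r/V)_d$ read off from $\mathrm{Id}_{[i_1,\ldots,i_s]}$ and $\ell=x$, $\ell'=y$, conditions (\ref{basis1}) and (\ref{basis3}) of Proposition \ref{basisx} are built into the construction, and condition (\ref{basis2})---that $xu_1,\ldots,xu_d$ is an $A$-basis of $(F_A^r/V)_{d+1}$---reflects the very definition of $\matrixPM$ as multiplication by $y$ on that basis, propagated in all higher degrees. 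Proposition \ref{basisx} then concludes $M\in Q_{[i_1,\ldots,i_s]}(A)$, and the \Pl\ chart is the prescribed one by construction.

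The main obstacle I foresee is precisely checking condition (\ref{basis2}) in this reverse direction: starting only from the combinatorial extension of $\mathbf{C}$ produced by the recursion, one must confirm that the submodule $V_d\subset\bigoplus^r S_{A,d}$ extends to a submodule $V\subset F_A^r$ whose quotient is locally free of rank $d$ in every degree $\ge d$, and in particular that $xu_1,\ldots,xu_d$ span and are independent in degree $d+1$. This is essentially forced by the design of the recursion, since $\matrixPM$ propagates the action of $y/x$ on $xu_1,\ldots,xu_d$ and the commutativity of $x$- and $y$-multiplications is baked into the construction; still, it is the one computation of real substance and is best done by induction on the degree, precisely in the spirit of the last paragraph of Proposition \ref{basisx}. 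Once it is established, the two assignments are manifestly mutually inverse and functorial in $A$, so the functor $Q_{[i_1,\ldots,i_s]}$ is represented by $\mathbb A^{dr}_k=\Spec(k[w_{h,m}]_{h=1,\ldots,d,\,m=1,\ldots,r})$ via the map $\phi_{[i_1,\ldots,i_s]}$ described in the statement.
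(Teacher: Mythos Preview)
Your proposal is correct and follows essentially the same route as the paper's proof: both set up the functorial bijection $Q_{[i_1,\ldots,i_s]}(A)\leftrightarrow A^{dr}$ by extracting the $s$ free columns of $P(M)$ together with the first column of each block $m>s$, with injectivity coming from the recursion $B_{i,j+1}=P(M)\,B_{i,j}$ and surjectivity from Proposition~\ref{basisx}. The obstacle you flag---verifying condition~(\ref{basis2}) in the reverse direction---is exactly the sufficiency direction of Proposition~\ref{basisx}, which the paper invokes in the same way without further elaboration, so your argument is if anything more explicit about where the actual content lies.
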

\begin{proof}  Let $A$ be any $k$-algebra. We now prove that given description of  $ \phi_{[i_1,\dots,i_s]} $ gives    a bijective  function between $  Q_{[i_1,\dots,i_s]}(A) $  and   $\mathrm{Hom}_A(A[w_{h,m}]_{h=1, \dots, d, \ m=1, \dots, r},A)$.

First we observe that the columns to which we associate the variables $w_{1,m}, \dots, w_{d,m}$    for $m=1, \dots, s$ allow us to construct the matrix $\matrixPM$ of Proposition \ref{basisx}, while for  $m>s$  we  associate to  $w_{1,m}, \dots, w_{d,m}$ the entries of the first column of the $m$-th block.

Therefore, $ \phi_{[i_1,\dots,i_s]}(A)$(M) encodes both $\matrixPM$ and the initial column of each block:   by repeatedly multiplying  the first column of each block by $\matrixPM$  we get the other columns of that block. Therefore, the map $ \phi_{[i_1,\dots,i_s]}(A)$  is  well defined and  injective.

Moreover,  $ \phi_{[i_1,\dots,i_s]}(A)$   is surjective since,   by  Proposition \ref{basisx},  we may freely choose in $A$ the images of the variables $w_{h,m}$ and get a module $M$ in  $ Q_{[i_1,\dots,i_s]}(A)$. 
\end{proof}

\begin{corollary}\label{cor:smooth}
$\QUOT$ is smooth, rational of dimension $dr$.
\end{corollary}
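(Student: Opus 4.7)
The plan is that this corollary should follow essentially as a one-line consequence of the two immediately preceding results, Proposition \ref{star} and Corollary \ref{cor:isom}. First I would invoke Corollary \ref{cor:isom}: each open subset $\mathbb Q_{[i_1,\dots,i_s]}$ is isomorphic to $\mathbb A^{dr}$, and the same therefore holds after the linear changes of coordinates in $\mathrm{GL}_2(k)$ and $\mathrm{GL}_r(k)$, so each $\mathbb Q_{[i_1,\dots,i_s]}^{\ast}$ is covered by opens isomorphic to affine $dr$-space. By Proposition \ref{star}, these cover $\QUOT$ globally. Since smoothness is a local property, $\QUOT$ is smooth; and since each chart has dimension $dr$, every irreducible component of $\QUOT$ has dimension $dr$.

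For the rationality statement I need $\QUOT$ to be irreducible so that it makes sense to speak of it as a variety birational to $\mathbb A^{dr}$. The chart $\mathbb Q_{[d]} \simeq \mathbb A^{dr}$ is irreducible and open in $\QUOT$, so once irreducibility is known, this open subset is automatically dense and rationality is immediate. Because $\QUOT$ is smooth, irreducibility is equivalent to connectedness, so it suffices to check that the charts $\mathbb Q_{[i_1,\dots,i_s]}^\ast$ pairwise intersect. I would argue this by exhibiting a module that lies in essentially every chart simultaneously: the module corresponding to a quotient supported on $d$ distinct simple points of $\mathbb P^1$, with a generic surjection $k^r \twoheadrightarrow k$ at each point. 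Such a module is maximally generic, and after suitable choice of basis (as constructed in the proof of Proposition \ref{star}) it belongs to many $\mathbb Q_{[i_1,\dots,i_s]}^\ast$ at once, so the cover is connected.

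The only step that requires more than a line of justification is the irreducibility/connectedness argument. The potentially fiddly part is verifying that a generic split module does simultaneously satisfy the invertibility conditions defining more than one chart; this requires checking that, for a module of the form $\bigoplus_{i=1}^d k_{p_i}$ with $d$ distinct support points and generic 1-dimensional quotients at each, one can produce the identity submatrix $\mathrm{Id}_{[i_1,\dots,i_s]}$ for different choices of $[i_1,\dots,i_s]$ after a suitable linear change in $\mathrm{GL}_2(k) \times \mathrm{GL}_r(k)$. Alternatively, and perhaps more cleanly, one could cite the classical fact that $\mathrm{Quot}^d_{\mathcal F^r/\mathbb P^1/k}$ is known to be connected (indeed irreducible) in this setting, so that smoothness upgrades connectedness to irreducibility and rationality follows from the existence of the dense open $\mathbb A^{dr}$.
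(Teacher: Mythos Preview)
Your proposal is correct and matches the paper's approach: in fact the paper states Corollary~\ref{cor:smooth} with no proof at all, treating it as an immediate consequence of Proposition~\ref{star} and Corollary~\ref{cor:isom}. Your write-up is thus \emph{more} detailed than the paper's, and in particular you correctly flag that rationality presupposes irreducibility, a point the paper leaves entirely implicit.

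One small streamlining of your connectedness argument: rather than working with a generic split module and checking several Pl\"ucker conditions at once, it is cleaner to show directly that every chart $\mathbb Q_{[i_1,\dots,i_s]}$ meets $\mathbb Q_{[d]}$. Using the parametrization of Corollary~\ref{cor:isom}, one may choose the free columns of $\matrixPP_{[i_1,\dots,i_s]}$ so that the resulting matrix is a companion matrix (set the free column at position $i_1+\cdots+i_m$ equal to the $(i_1+\cdots+i_m+1)$-th standard basis vector for $m<s$); then $u_1$ becomes a cyclic vector and the corresponding module lies in $\mathbb Q_{[d]}$. Since each $\mathbb Q_{[i_1,\dots,i_s]}\cong\mathbb A^{dr}$ is irreducible and they all meet the irreducible open $\mathbb Q_{[d]}$, the whole scheme is irreducible, hence rational via the dense open $\mathbb Q_{[d]}\cong\mathbb A^{dr}$. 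This avoids the ``fiddly part'' you mention.
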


\section{Hilb-support of  points of $\QUOT$}

This short section is dedicated to studying 
  the support of every module $M\in \QUOTf(A)$ in order to associate to it an element of  $\HILBf(A)$.

We have already observed that the support of $M$ is zero-dimensional of lenght  $\leq d$.  The following example shows that the inequality  can be strict.

\begin{example}\label{exAnn} Let $A=k$   and let $M$ be the degree $2$ component of 
$k[x,y]/(y) e_1\oplus k[x,y]/(y)e_2$. 
Then  $M\in {{Quot}^2_{\mathcal  F^2/  \mathbb P^1/  k}}(k)$
  and  its support  is the length 1 subscheme of $\PP^1$ defined by $y=0$.
\end{example}

\begin{notation}
For every   module   $M=(F_A^r/V)_2$ in  $Q_{[i_1,\dots,i_s]}^\ast(A)$, let  
$u_1,\dots, u_d$  the $A$-basis,  $\ell$, $\ell' $ be the basis of $ k[x,y] $    and  $\matrixPM$ be  the matrix  of 
  Proposition \ref{basisx}.    We will denote by  $\chi_{\matrixPM}(T)$ 
 the characteristic polynomial $(-1)^d(T^d- \tr(\matrixPM)T^{d-1}+\dots +\det(\matrixPM))$ of $\matrixPM$ and  by $\mathcal A_M$ the ideal in $S_A$ generated by $ (\ell')^d\chi_{\matrixPM}( \ell'/ \ell) $. 
\end{notation}

\begin{proposition} \label{propannullatore} 
Let  $A$ be a $k$-algebra and $M=(F_A^r/V)_d $ be any module in $Q^\ast(A)$. Then
\begin{enumerate}[\quad \rm 1)]
\item  \label{annullatore1}the ideal $\mathcal A_M$ only  depend on $M$; 
\item   \label{annullatore2}  $\mathcal A_M \subseteq \Ann_{S_A}  (F_A^r/V)  $; 
\item  \label{annullatore3}  $\mathcal A_M =\Ann_{S_A}  (F_A^r/V)  $ if, and only if   $M\in Q^\ast_{[d]}(A)$.
\end{enumerate}
\end{proposition}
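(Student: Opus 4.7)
The statement naturally splits into a well-definedness check (Part 1), a Cayley--Hamilton application (Part 2), and a cyclicity characterization (Part 3). Throughout the argument, the main tool is the matrix $P(M)$ of multiplication by $\ell'$ from Proposition \ref{basisx}, and the polynomial $g := \det(\ell' I - \ell P(M)) \in S_{A,d}$ generating $\mathcal A_M$.

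\textbf{Parts 1 and 2.} For well-definedness, a basis change $\{u_i\} \to \{u_i'\}$ by $N \in \mathrm{GL}_d(A)$ replaces $P(M)$ with its conjugate $N^{-1} P(M) N$ and so preserves the characteristic polynomial; a coordinate change $(\ell,\ell') \to (a\ell+b\ell',c\ell+d\ell')$ with $\mathrm{GL}_2(k)$-matrix requires $aI+bP(M)$ to be invertible (so that $\{\tilde\ell u_i\}$ remains a basis), and then the direct identity
\[
\det(aI+bP(M)) \cdot \det(\tilde\ell' I - \tilde\ell\, \tilde P(M)) = (ad-bc)^d \cdot g
\]
shows $g$ changes only by a unit. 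For the inclusion $\mathcal A_M \subseteq \mathrm{Ann}$, induction on $i$ using $\ell' u_j = \sum_m \ell u_m P(M)_{mj}$ gives that multiplication by $(\ell')^i \ell^{d-i}$ from $(F_A^r/V)_d$ to $(F_A^r/V)_{2d}$ has matrix $P(M)^i$ with respect to the bases $\{u_j\}$ and $\{\ell^d u_j\}$. Hence $g$ acts as $\chi_{P(M)}(P(M)) = 0$ by Cayley--Hamilton, so $g \cdot (F_A^r/V)_d = 0$. Using $\{\ell^{m-d} u_j\}$ as basis of $(F_A^r/V)_m$ for $m \geq d$ extends this to all degrees $\geq d$; and for $m < d$, if $\alpha \in (F_A^r/V)_m$ then $\ell^N g\alpha = g(\ell^N \alpha) = 0$ for $N$ large, and, combined with the analogous statement for $\ell'$, saturation of $V$ forces $g\alpha = 0$.

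\textbf{Part 3.} For $M \in Q^\ast_{[d]}(A) \Rightarrow \mathcal A_M = \mathrm{Ann}$, after the defining linear changes the basis of $(F_A^r/V)_d$ is $\{x^{d-j+1}y^{j-1}\overline{e_1}\}_{j=1}^d$, so $(F_A^r/V)_d = S_{A,d}\cdot\overline{e_1}$. The kernel of the surjection $S_{A,d} \twoheadrightarrow (F_A^r/V)_d$ is a free $A$-module of rank $1$ containing $g$ (whose $y^d$-coefficient is $1$), hence equals $Ag$; the same rank count in higher degrees, together with the fact that $g$ is a non-zero-divisor in $S_A$, gives $\mathrm{Ann}(\overline{e_1})_m = (g)_m$ for $m \geq d$. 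Saturation extends equality to all degrees, and the cyclicity of $F_A^r/V$ on $\overline{e_1}$ yields $\mathrm{Ann} = (g) = \mathcal A_M$. For the converse, since $Q^\ast_{[d]}$ is open in $\QUOT$, it suffices to argue fiberwise, so take $A = K$ a field. Decompose $F_K^r/V = \bigoplus_P (F_K^r/V)_P$ by support points; each local summand is a finite-length module over the DVR $\mathcal O_{\PP^1_K, P}$, hence of the form $\bigoplus_i \mathcal O_P/(t^{a_i^{(P)}})$. The generator of $\mathrm{Ann}$ then has degree $\sum_P \max_i a_i^{(P)}$, while $\mathcal A_M$ is generated by a form of degree $d = \sum_P \sum_i a_i^{(P)}$ (the Hilbert polynomial). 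Equality forces a single summand at each $P$, i.e.\ $F_K^r/V$ is cyclic; aligning the cyclic generator with $e_1$ via $\mathrm{GL}_r(k)$ and choosing coordinates so that the degree-$d$ relation has non-zero $y^d$-coefficient places $M$ in $Q^\ast_{[d]}(K)$.

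\textbf{Main obstacle.} The hardest step is the converse direction of Part 3: reading off cyclicity of $F_A^r/V$ from the equality of ideals requires both the local DVR decomposition at each support point (handled fiberwise) and a careful translation into the rigid combinatorial form of $Q^\ast_{[d]}$. Handling non-field $A$ further relies on the fact that membership in the open subfunctor $Q^\ast_{[d]}$ can be verified pointwise, so that one avoids having to rebuild the structure theorem over $A$ itself.
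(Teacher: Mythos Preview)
Your treatment of Parts 1, 2, and the forward implication of Part 3 is correct and runs parallel to the paper: the only differences are cosmetic --- you handle the $\mathrm{GL}_2(k)$ coordinate change with a single determinantal identity, whereas the paper checks the two generating cases $(\ell,\ell')\mapsto(\ell',\ell)$ and $(\ell,\ell')\mapsto(\ell,\ell'+c\ell)$ separately; and for Part 2 the paper more directly observes that $g\cdot e_i=0$ for each $i$, which already suffices since the $e_i$ generate $F_A^r/V$ as an $S_A$-module.

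For the converse in Part 3 you go beyond the paper, whose proof treats only the direction $M\in Q^*_{[d]}(A)\Rightarrow\mathcal A_M=\mathrm{Ann}$ and is silent on the other. Your structure-theorem argument over a field $K$ is sound (the alignment of a cyclic generator with $e_1$ via $\mathrm{GL}_r(k)$ can be completed by Zariski density of $k^r$ in $K^r$, as $k$ is infinite). However, the reduction from a general $A$ to its residue fields has a genuine gap. Openness of $Q^*_{[d]}$ lets you test the \emph{conclusion} fibrewise, but you also need the \emph{hypothesis} $\mathcal A_M=\mathrm{Ann}_{S_A}(F_A^r/V)$ to pass to each fibre, and the formation of $\mathrm{Ann}$ does not commute with non-flat base change. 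In fact the converse fails over such $A$: take $d=r=2$, $A=k[[t]]$, and let $M$ arise from the quotient of $F_A^2$ by the relations $ye_1-txe_2$ and $ye_2$. Then $P(M)=\left(\begin{smallmatrix}0&0\\ t&0\end{smallmatrix}\right)$, so $\mathcal A_M=(y^2)$; a direct check gives $\mathrm{Ann}(e_1)=(y^2)$ and $\mathrm{Ann}(e_2)=(y)$, hence $\mathrm{Ann}=(y^2)=\mathcal A_M$. Yet the special fibre is $\mathcal O_R e_1\oplus\mathcal O_R e_2$, which is not cyclic, so $M\notin Q^*_{[2]}(A)$. Thus your field argument correctly establishes the ``only if'' when $A$ is a field, but the reduction step cannot be repaired for general $A$ --- the obstruction lies in the statement itself, not merely in your strategy.
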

\begin{proof}
As the three properties are  local, we may assume that $A$ is local. 

\begin{enumerate}[\quad \rm 1)]
\item  If $M\in Q_{[i_1,\dots,i_s]}(A)\cap Q_{[i'_1,\dots,i'_{s'}]}(A)$, we can construct a generator of $\mathcal A$ 
 considering $M$ as an element of either $ Q_{[i_1,\dots,i_s]}(A)$ or of $ Q_{[i'_1,\dots,i'_{s'}]}(A)$. In both cases $\ell=x$, $\ell'=y$ and the two matrices $\matrixPM$, $P'(M)$ are associated by the invertible matrix of basis  change from  $[u_1, \dots u_d]$ to   $[u'_1, \dots, u'_d]$, so that they have the same characteristic polynomial.

The same happens if we modify the linear basis for $k[x,y]$. It is sufficient to check this fact considering  the following  two special cases. 

If  $\ell_1=x, \ell'_1=y$ and $\ell_2=y, \ell'_2=x$, then the correspondng matrices $P_1(M)$ and $P_2(M)$ are inverse, namely $P_2(M)=(P_1(M))^{-1}$, so that  $  x^d \det(P_1(M)-\frac{y}{x} I)$ and $y^d\det((P_2(M)) ^{-1}-\frac{x}{y} I)$ are equal, up to a sign.

If  $\ell_1=x, \ell'_1=y$ and $\ell_2=x, \ell'_2=y+cx$ with $c\in k$, then the  correspondng matrices $P_1(M)$ and $P_2(M)$ satisfy the equality  $P_2(M)=P_1(M)+cI$, so that  $  x^d \det(P_1(M)-\frac{y}{x} I)=( y+cx)^d(\det(P_2(M) -\frac{y+cx}{x} I)$.

\item     The result is a straightforward consequence of what observed in the proof of Corollary  \ref{cor:isom} and of the well known fact that every matrix is a root of its characterstic polynomial, so that $\chi_{\matrixPM}(\matrixPM)=0$. By  item \ref{annullatore1}) we may born ourselves to the case $\ell'=y$, $\ell=x$.

For every $i=1, \dots, r$  let $B_{i,j}$ the  $j$-th column of the $i$-th block of $\mathbf C(M)$. Then  $B_{i,j}=	\matrixPM^{j-1}B_{i,1}$. 
 If $\chi_{\matrixPM}(T)=a_1  +a_2T +\dots + a_dT^{d-1}+T^d$, then    
\begin{multline*}   a_1 B_{i,1}  +a_2 B_{i,2}+\dots + a_d B_{i,d}+B_{i,d+1}=\\
= a_1 B_{i,1}  +a_2P B_{i,1}+\dots + a_d \matrixPM^{d-1}B_{i,1}+\matrixPM^dB_{i,1}=  \chi_{\matrixPM}(\matrixPM) B_i=0.
\end{multline*}

Therefore,  in $M$ we have  $x^d\chi_{\matrixPM}(y/x)e_i=(a_1x^d  
+a_2x^{d-1}y  +\dots + a_dxy^{d-1} +y^d)e_i=0$, for every $i=1,\dots, r$, hence    $\mathcal A_M\subseteq \Ann_{S_A}(F^r_A/V)$. 1

\item
 Both ideals  $\mathcal A_M$  and $\Ann_{S_A}(F^r_A/V)$   are homogeneous  and  every   polynomial $f\in S_{A,m} $ , $m\geq d$, can be written as  $(a_1x^d   + +a_2x^{d-1}y  +\dots + a_dxy^{d-1} +y^d)g+    x^{m-d}(c_1x^d+c_2x^{d-1}y+ \dots+c_{d} xy^{d-1})  $ for some $c_1, \dots, c_{d}\in A$. Then,  if not empty, the difference  $ \Ann_{S_A}  (F^r_A/V)\setminus  \mathcal A_M$ should contain some element of the tipe $c_1x^d+c_2x^{d-1}y+ \dots+ c_{d}x y^{d-1}) $: recall  that in  Proposition \ref{basisx}  we proved that the multiplication by $x$ in $(F_A^r/V)_{\geq d}$  is injective.

If $F_A^r/V\in  Q_{[d]}(A)$, then $u_1=x^de_1, \dots, u_d=xy^{d-1}e_1$  is a basis of $M$, so that  $(c_1x^d+c_2x^{d-1}y+ \dots+ c_{d}x y^{d-1}) e_1=c_1u_1+c_2u_2+ \dots +c_du_d$ vanish   if, and only if,  $c_1=c_2=\dots =c_d=0$. Therefore $ \Ann_{S_A}  (F^r_A/V)= \mathcal A_M$.
\end{enumerate}
\end{proof}

\begin{definition}\label{def:hilbsupp}
Let $A$ be any $k$-algebra and $M$ be any element of $Q^\ast(A)$. 
We call  {\it Hilb-support} of $M$ the scheme  $\Proj(A[x,y]/ \mathcal A_M)$. 
\end{definition}

We now motivate the name we choose for this new kind of support of a module.

\begin{lemma}
For any $k$-algebra $A$, if  $M$ is an element of $Q^\ast(A)$,  then 
the schematic support $Z$ of $M$ coincides with the  Hilb-support if,  and only if $M\in Q^\ast_{[d]}(A)$.

In particular, for $A=k$,  if $M$ is any module in $\QUOTf(k)$ whose schematic support  is a zero-dimensional scheme $Z$ formed by $d$ distinct $k$-point of $\PP_1$, then $Z$ is also the Hilb-support.
\end{lemma}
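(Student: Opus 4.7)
The plan is to deduce the ``iff'' from Proposition~\ref{propannullatore}(\ref{annullatore3}) by reducing the equality of subschemes to the equality of their (saturated) defining ideals, and then handle the ``in particular'' part by a length count. The argument is local on $\Spec A$, so I can assume $A$ is local and pick a chart $Q_{[i_1,\dots,i_s]}(A)$ provided by Proposition~\ref{star}, which also lets me take $\ell=x$, $\ell'=y$ when computing $\mathcal{A}_M$.

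The key preliminary step is to verify that $\mathcal{A}_M$ is already $(x,y)$-saturated as an ideal of $S_A$. Writing $\mathcal{A}_M=(f)$ with $f=\ell^d\chi_{P(M)}(\ell'/\ell)$, one observes that $f$ is a homogeneous form of degree $d$ whose coefficient of $(\ell')^d$ equals $1$. Hence $S_A/(f)$ is free of rank $d$ over $A[\ell]$, with basis $1,\ell',\dots,(\ell')^{d-1}$, so $\ell$ is a non-zero-divisor on $S_A/(f)$. This forces $(f):\ell^{\infty}=(f)$, and \emph{a fortiori} $(f):(x,y)^{\infty}=(f)$.

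For the first assertion, the direction $(\Leftarrow)$ is immediate: Proposition~\ref{propannullatore}(\ref{annullatore3}) gives $\mathcal{A}_M=\Ann_{S_A}(F_A^r/V)$ as soon as $M\in Q^\ast_{[d]}(A)$, so the two subschemes coincide. For $(\Rightarrow)$, suppose the two subschemes of $\PP^1_A$ coincide. Since closed subschemes of $\PP^1_A$ correspond bijectively to saturated homogeneous ideals of $S_A$, this coincidence reads $\sat(\Ann_{S_A}(F_A^r/V))=\mathcal{A}_M$, using that $\mathcal{A}_M$ is itself saturated by the previous step. Combined with the chain $\mathcal{A}_M\subseteq\Ann_{S_A}(F_A^r/V)\subseteq\sat(\Ann_{S_A}(F_A^r/V))$ coming from Proposition~\ref{propannullatore}(\ref{annullatore2}), this pinches to $\mathcal{A}_M=\Ann_{S_A}(F_A^r/V)$, whence $M\in Q^\ast_{[d]}(A)$ by Proposition~\ref{propannullatore}(\ref{annullatore3}) once again.

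For the ``in particular'' claim over $A=k$, the Hilb-support always has length exactly $d$, being cut out by a degree-$d$ form monic in one variable; since the schematic support is contained in the Hilb-support and itself has length $d$ when it consists of $d$ distinct reduced $k$-points, the two subschemes of $\PP^1$ must coincide. The main technical obstacle is the saturation step; everything else is a formal pinching of ideals together with a careful appeal to Proposition~\ref{propannullatore}.
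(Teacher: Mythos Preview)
Your proof is correct but follows a different route from the paper's. The paper first performs a general $k$-linear change of the basis $e_1,\dots,e_r$ (using that $k$ is infinite) so that the schematic support of $M$ equals that of the cyclic submodule generated by $e_1$; it then observes that this support has degree $d$ precisely when $M\in Q^\ast_{[d]}(A)$, and compares with the Hilb-support, which always has degree $d$. You instead stay purely ideal-theoretic: after checking that $\mathcal{A}_M$ is saturated, the equality of subschemes becomes $\sat\bigl(\Ann_{S_A}(F^r_A/V)\bigr)=\mathcal{A}_M$, and pinching with Proposition~\ref{propannullatore}(\ref{annullatore2}) gives $\mathcal{A}_M=\Ann_{S_A}(F^r_A/V)$, whence the conclusion via Proposition~\ref{propannullatore}(\ref{annullatore3}). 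Your argument is cleaner in that it avoids any genericity, but be aware that it invokes the ``only if'' direction of Proposition~\ref{propannullatore}(\ref{annullatore3}), which the paper states but whose written proof only treats the ``if'' direction; the paper's proof of the present lemma is arranged to rely solely on that established direction. For the ``in particular'' clause, your length comparison (schematic support contained in the Hilb-support, both of length $d$) is essentially the same idea as the paper's degree comparison.
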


\begin{proof}
In order to prove this statement it is sufficient to improve the proof of  Proposition \ref{star}. Let $M$ be  a module $Q^\ast(A)$:  as $k$ is an infinite field,  up to a general change of the basis  $e_1, \dots, e_r$  with coefficients in $k$, we can assume that  the support of $M$ coincides with the support of the submodule of $M$ generated by $e_1$.

Now, if the support of $M$ does not have degree $d$, then it is not the Hilb-support of $M$. Otherwise $M$ belongs to  $Q^\ast_{[d]}(A)$  and we apply Proposition  \ref{propannullatore} .
\end{proof}

We now give a concrete insight to the notion of Hilb-support, when $A=k$.

Observe that if we take $M_1 \in {Quot}^{d_1}_{\mathcal  F^{r_1}/  \mathbb P^1/k}(k)$, $M_2 \in{Quot}^{d_2}_{\mathcal  F^{r_2}/  \mathbb P^1/k}(k)$, then the Hilb-support of the module $M_1\oplus M_2 \in  {Quot}^{d_1+d_2}_{\mathcal  F^{r_1+r_2}/  \mathbb P^1/k}(k) $ is the sum of the Hilb-support of the two modules.

Let $A=k$, $M\in \QUOTf (k)$ and let $Z=\{R_1,\dots, R_n\}$ be the set-theoretical support of $M$.  We denote by $I_{R_j}$ the maximal saturated ideal in $S$ defining the point $R_j\in \mathbb P^1$.
We consider $\mu_j:= \dim_k (M\otimes_S S/I_{R_j})$. In this case, the {\it Hilb-support} of $M$ is $\mu_1 R_1+\cdots+\mu_n R_n\in \HILBf(k)$.

\begin{definition}\label{def:xi}

We define  a morphism of schemes $\xi  \colon \QUOT \to \HILB $  using open subfunctors. We first define $\xi$  on 
$Q_{[i_1,\dots,i_s]}$, for any  $[i_1,\dots,i_s]$:\\
  for every $k$-algebra $A$,  
\[
\xi(A) \colon Q_{[i_1,\dots,i_s]}(A) \to \HILBf(A), M\mapsto \xi(A)(M)=\mathcal A_M,
\]
The map  $\xi$ is well defined thanks to Proposition \ref{propannullatore}.
\end{definition}

\begin{remark}
By definition, the map $\xi$ associate to every $k$-point $M$ of $\QUOT$  its Hilb-support $Y$, that does not always coincide with  the schematic support $Z$ of $M$.   Therefore the fibre of $\xi$ over a $k$-point $Y$ of $\HILB$ is in general a proper subscheme of $\QUOTZ$, as shown by the following example.  However, we can describe the fibre of $\xi$ over any point $Y$ of $\HILB$ using Quot schemes over $0$-dimensional schemes of $\PP^1$. 
\end{remark}

\begin{example}
For $r=2$, $d=2$, consider $Y=R+R'$, with $R\neq R'$. Every module whose Hilb-support is $Y$ is completely determined by its restrictions to $R$ and to $R'$;  the condition on the Hilb support requires that each restriction is locally free with rank 1. Therefore,   the fiber of $\xi$ over $Y$ is irreducible and isomorphic to $\mathbb P^1\times \mathbb P^1$. However $\mathrm{Quot}^2_{\mathcal  F_Y/  Y/   k}$ has also two other isolated points, corresponding to the modules $M_1=\mathcal O_R\oplus \mathcal O_R$ and $M_2=\mathcal O_{R'}\oplus \mathcal O_{R'}$, whose schematic support is $R\subsetneq Y$ (resp. $R'\subsetneq Y$). Observe that $\xi(M_1)=2R$ and $\xi(M_2)=2R'$.
\end{example}

\begin{proposition}\label{prop:fibers} Let $Y$ be a degree $d$ 0-dimensional subscheme of $\PP^1$

\begin{enumerate}[\quad \rm1)]
\item If $Y=dR$, then $\xi^{-1}(Y)=\mathrm{Quot}^d_{\mathcal  F_Y/ Y /   k}$. 
\item If $Y=Y_1+ \dots Y_n$ with $Y_1=t_1 R_1, \dots,  Y_n=t_n R_n $,  where $R_1, \dots R_n$ are distinct points, then 
$\xi^{-1}(Y)=\mathrm{Quot}^{t_1}_{\mathcal  F_{Y_1}/ Y_1/   k}\times \dots \times \mathrm{Quot}^{t_n}_{\mathcal  F_{Y_n}/ Y_n/   k}$. 
\end{enumerate}
\end{proposition}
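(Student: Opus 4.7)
The plan is to handle part (2) first by a Chinese Remainder decomposition and then to prove part (1) via explicit matrix computations as the main case.

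For part (2), since the $R_j$ are pairwise distinct points, choose linear forms $L_j \in k[x,y]_1$ with $R_j = V(L_j)$; then the $L_j^{t_j}$ are pairwise coprime and $I_Y = \prod_j (L_j^{t_j})$, so by the Chinese Remainder theorem $\mathcal O_Y \cong \prod_j \mathcal O_{Y_j}$ and $\mathcal F^r_Y \cong \prod_j \mathcal F^r_{Y_j}$. A module $M \in \xi^{-1}(Y)(A)$ is annihilated by $I_Y$ (by Proposition \ref{propannullatore}(2) combined with $\mathcal A_M = I_{Y_A}$), and applying CRT to its $S_A$-module structure splits it as $M = \bigoplus_j M_j$ with $M_j$ a quotient of $\mathcal F^r_{Y_j,A}$. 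Correspondingly, after a suitable change of basis for the $u_h$, the matrix $P(M)$ becomes block-diagonal with blocks $P_j$ of size $t_j$, and $\chi_{P(M)} = \prod_j \chi_{P_j}$. The condition $\xi(M) = Y$ is then equivalent to each $\chi_{P_j}$ giving Hilb-support $Y_j$, i.e., each $M_j \in \xi^{-1}(Y_j)(A)$. Together with part (1) applied to each $Y_j = t_j R_j$, this yields the claimed product decomposition.

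For part (1), with $Y = dR$, apply a $\mathrm{GL}_2(k)$-change of coordinates to arrange $R = (1{:}0)$, so that $I_Y = (y^d)$. On each open $\mathbb Q_{[i_1,\ldots,i_s]} \cong \mathbb A^{dr}$ of Corollary \ref{cor:isom}, a module is encoded by the matrix $P = P(M)$ of Proposition \ref{basisx}. In these local coordinates the condition $\xi(M) = Y$ translates into $\chi_P(T) = T^d$---that is, the vanishing of $\mathrm{tr}(P),\, e_2(P),\, \ldots,\, \det(P)$---while the condition $M \in \mathrm{Quot}^d_{\mathcal F_Y/Y/k}$, viewed as a subfunctor of $Q$ via the factorization $\mathcal F^r \twoheadrightarrow \mathcal F^r_Y \twoheadrightarrow M$, becomes $y^d \cdot M = 0$. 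Iterating the relation $y\,u_i = \sum_j P_{ji}\,x\,u_j$ from Proposition \ref{basisx} yields $y^d u_i = x^d (P^d u)_i$, so this condition is equivalent to $P^d = 0$; the inclusion $\{\chi_P = T^d\} \subseteq \{P^d = 0\}$ is then immediate from Cayley--Hamilton.

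The hard part is to establish the reverse inclusion scheme-theoretically, i.e., that both conditions cut out the same closed subscheme of $\mathbb A^{dr}$. Set-theoretically, over any field a nilpotent $d \times d$ matrix has characteristic polynomial $T^d$, so the two loci coincide on closed points; however, for arbitrary $A$-valued points additional work is needed. I plan to establish the scheme-theoretic equality by combining Cayley--Hamilton with the fact that $M$ is $A$-locally free of rank $d$ and by working universally on $\mathrm{Quot}^d_{\mathcal F_Y/Y/k}$, so as to identify the ideals generated by the entries of $P^d$ and by the coefficients of $\chi_P - T^d$ inside the coordinate ring of $\mathbb Q_{[i_1,\ldots,i_s]}$.
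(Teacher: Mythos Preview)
Your argument is considerably more careful than the paper's, which works entirely at the level of $k$-points: a $k$-point of $\xi^{-1}(dR)$ is annihilated by $y^d$ (Proposition~\ref{propannullatore}(2)), hence lies in $\mathrm{Quot}^d_{\mathcal F_Y/Y/k}$, and conversely a nilpotent matrix over a field has characteristic polynomial $T^d$. The CRT reduction you give for part~(2) is left implicit in the paper. Your local identification of the two conditions---the vanishing of the coefficients of $\chi_P-T^d$ for $\xi^{-1}(dR)$, versus the vanishing of the entries of $P^d$ for $\mathrm{Quot}^d_{\mathcal F_Y/Y/k}$---and the Cayley--Hamilton inclusion $\xi^{-1}(dR)\hookrightarrow\mathrm{Quot}^d_{\mathcal F_Y/Y/k}$ are correct and clarifying.

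However, your plan for the ``hard part'' cannot succeed, because the two ideals are genuinely different. The paper's own Example~\ref{ex:d2} (case $d=2$, chart $\mathbb Q_{[1,1]}$) makes this explicit: the ideal $I_{\mathbf P^2}$ generated by the entries of $\mathbf P^2$ is generated in degree~$2$ and so cannot contain the linear form $\tr(\mathbf P)=w_{1,1}+w_{2,2}$, which does lie in the ideal $(\tr\mathbf P,\det\mathbf P)$ cutting out the fibre of $\xi$; indeed the paper computes that $V(I_{\mathbf P^2})$ carries an embedded component at the origin, whereas $V(\tr\mathbf P,\det\mathbf P)$ is a reduced quadric cone. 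Thus scheme-theoretically one has only the \emph{strict} closed immersion $\xi^{-1}(dR)\subsetneq\mathrm{Quot}^d_{\mathcal F_Y/Y/k}$, with equal underlying sets. In other words, Proposition~\ref{prop:fibers} should be read as an equality of $k$-points (equivalently, of reduced structures), which is exactly what the paper's brief proof shows and what your set-theoretic observation already establishes; there is no further ``hard part'' to carry out. You may wish to note that this reading sits uneasily with the use of the proposition in Theorem~\ref{thm:main}, since the embedded components asserted there live on $\mathrm{Quot}^d_{\mathcal F_Y/Y/k}$ rather than on the scheme-theoretic fibre of $\xi$.
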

\begin{proof}
In the first case, a $k$-point of $\xi^{-1}(Y)$ must have set  theoretical support  $R$   and have Hilbert polynomial $d$. As already observed, this is equivalent to require that the Hilb-support  is $dR$.

In the second case every module $M$  in the fibre of $Y$ is determined by its restriction $M_i$ to tech  points  $R_i$; by definition of Hilb-support the Hilbert polynomial of  the restriction to $R_i$ must be exactly $t_i$;  hence,  by the previous item,  $M_i$  is a point of $\mathrm{Quot}^{t_1}_{\mathcal  F_{Y_1}/ Y_1/   k}$ .
\end{proof}

Due to the previous result, in next section we study the Quot schemes over  a $0$-dimensional scheme of  $\PP^1$  whose set theoretical support is a single point.

\section{Quot over a 0-dimensional scheme supported on a single point}


In the present section we study the modules belonging to $\QUOTZ$, Quot scheme defined on $Y$, 0-dimensional scheme of $\mathbb P^1$ supported at a single point.

We denote by  $I_Y\subset S$ the saturated ideal such that $\Proj(S/I_Y)=Y$,  and by $\mathcal O_Y$ the structural sheaf of  $\Proj(S/I_Y)$. We set  $\mathcal F^r_Y :=\oplus^r \mathcal O_Y= \mathcal O_{Y}e_1 \oplus \cdots \oplus \mathcal O_{Y}e_r$.

\begin{definition}  We will denote by $Q^\ast_{[i_1,\dots,i_s],Y}$ (resp. $ \mathbb Q^\ast_{[i_1,\dots,i_s],Y}$) the open subfunctors (resp. subschemes) of  $\QUOTZf$ (of $\QUOTZ$) defined similarly to $Q^\ast_{[i_1,\dots,i_s]}$ (resp. $ \mathbb  Q^\ast_{[i_1,\dots,i_s]}$) in ${Quot}^t_{\mathcal  F^r/  \mathbb P^1/  k}$ (resp. $\mathrm{Quot}^t_{\mathcal  F^r/  \mathbb P^1/  k}$).  
\end{definition}

Every statement concerning an open subset $\mathbb Q^\ast_{[i_1,\dots,i_s]}$ of $\mathrm{Quot}^t_{\mathcal  F^r/  \mathbb P^1/  k}$ also apply to the open subset  $\mathbb Q^\ast_{[i_1,\dots,i_s],Y}$ of  $\QUOTZ$, provided that the matrix $\matrixPM$ of $M \in Q^\ast_{[i_1,\dots,i_s]}(A)$  is a root of the equation defining $Y$.  
More explicitely,  consider $Y=tR$ with $R$ a single point.  Assuming by simplicity that   $R$ is defined by $y=0$, the condition we have to impose  is the vanishing of  $\matrixPM^t$. 
Hence, we are interested in studying the following functor.

\begin{lemma}  \label{lem:W}  Let $\matrixPP=\left(w_{ij}\right)$  be  the $t\times t$ matrix with indeterminates entries  and let $\mathcal P$ be the functor from Ring to Sets  $$ A\mapsto \left\{t\times t \hbox{ matrices $P$ with entries in  $A$ such that } P^t=0 \right\}$$
and, for every ring homomorphism $ f\colon A\rightarrow B $

$$  P= \left(a_{ij}\right) \in \mathcal P(A) \mapsto  f(P)=\left(f(a_{ij})\right) \in \mathcal P(B).$$
Then $\mathcal P$ is the functor of points of the subscheme of $\mathbb A^{t^2}=\Spec(k[w_{ij}]) $ defined by the ideal 
$I_{\matrixPP^t}$ generated by the entries of 
$\matrixPP^t$.
\end{lemma}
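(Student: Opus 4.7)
The plan is to exhibit the affine scheme $X := \Spec\bigl(k[w_{ij}]/I_{\matrixPP^t}\bigr)$ as representing $\mathcal{P}$ by constructing a natural isomorphism
\[
\eta_A \colon \mathcal{P}(A) \xrightarrow{\sim} \mathrm{Hom}_{k\text{-alg}}\bigl(k[w_{ij}]/I_{\matrixPP^t},\, A\bigr),
\]
functorial in the $k$-algebra $A$. By the Yoneda lemma, producing such an $\eta$ suffices.

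First I would unpack what a $k$-algebra map $\varphi\colon k[w_{ij}] \to A$ is: by the universal property of a polynomial ring, $\varphi$ is determined uniquely by the values $a_{ij} := \varphi(w_{ij}) \in A$, so there is a canonical bijection between $\mathrm{Hom}_{k\text{-alg}}(k[w_{ij}], A)$ and the set of all $t \times t$ matrices over $A$, sending $\varphi \mapsto P_\varphi := (a_{ij})$. Such a $\varphi$ factors through the quotient by $I_{\matrixPP^t}$ exactly when it kills each generator, and the generators are the entries of the formal matrix power $\matrixPP^t$. Since $\varphi$ is a ring homomorphism it commutes with the sums and products used to form those entries, so applying $\varphi$ entrywise to $\matrixPP^t$ yields $P_\varphi^t$. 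Hence $\varphi$ descends to the quotient if and only if $P_\varphi^t = 0$, that is, if and only if $P_\varphi \in \mathcal{P}(A)$. Defining $\eta_A^{-1}(\varphi) := P_\varphi$ and taking $\eta_A$ to be its inverse gives the claimed bijection.

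Finally I would verify naturality: for a ring homomorphism $f \colon A \to B$, the two paths around the naturality square agree because on either side the resulting homomorphism $k[w_{ij}]/I_{\matrixPP^t} \to B$ sends $w_{ij}$ to $f(a_{ij})$, which is the $(i,j)$-entry of the matrix $\mathcal{P}(f)(P) = (f(a_{ij}))$. There is no genuine obstacle here; the only substantive point to articulate is the compatibility between the formal matrix operation $\matrixPP^t$ over $k[w_{ij}]$ and the actual matrix power $P_\varphi^t$ over $A$ under specialization by $\varphi$, and this is immediate from $\varphi$ being a ring homomorphism. The lemma is thus an instance of the standard principle that the closed subscheme of $\mathbb{A}^{t^2}$ cut out by polynomial equations represents the functor of solutions to those equations.
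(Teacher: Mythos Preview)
Your argument is correct and is precisely the standard verification the paper leaves implicit: the paper's own proof reads simply ``Obvious.'' You have spelled out exactly the representability-by-polynomial-equations principle that underlies that one word, so there is nothing to add.
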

\begin{proof}
Obvious.
\end{proof}

For every $k$-algebra $A$, similarly to what we did in Proposition \ref{star}  for  $\mathbb Q_{[i_1,\dots,i_s]}$,  we can construct    $\mathbb Q_{[i_1,\dots,i_s],Y}$,   as  a subscheme of the  affine scheme $\mathbb A^{tr}$. Consider now the matrix $\matrixPP_{[i_1,\dots,i_s]}$ having the same shape as the matrix $\matrixPM$ for $M\in Q_{[i_1,\dots,i_s],Y}(A)$ (see Proposition \ref{basisx}),  and whose entries not in the columns of $\mathrm{Id}_{[i_1,\dots,i_s]}$ are parameters $w_{h,m}$, $h=1,\dots,t $, $m=1,\dots, r$. Then $\mathbb Q_{[i_1,\dots,i_s],Y}$ is defined  by
\begin{equation}\label{varw}
\Spec\left(k[w_{h,m}]_{ h=1, \dots,   t, \ m=1, \dots,   r}/I_{\matrixPP_{[i_1,\dots,i_s]}^t}\right),
\end{equation}
where, depending on the chosen $[i_1,\dots,i_s]$, the indeterminates  $w_{h,m}, \ h=1, \dots,   t, \ m=1, \dots,  r$  correspond to the suitable entries of $\matrixPP_{[i_1,\dots,i_s]}$ (as described in in Corollary \ref{cor:isom}),  and $I_{\matrixPP_{[i_1,\dots,i_s]}^t}$   the ideal generated by the entries of the matrix $\matrixPP_{[i_1,\dots,i_s]}^t$.

Observe that, if $t\leq r$, then $\matrixPP_{[1,\dots,1]}=\matrixPP$ (there are no identity columns in $\matrixPP_{[1,\dots,1]}$). 
The scheme representing the functor $\mathcal P$ associated to $\matrixPP_{[i_1,\dots,i_s]}$ is geometrically non-trivial. Indeed, even for $t=2$ and $\matrixPP_{[1,1]}$, the scheme representing $\matrixPP_{[1,1]}$ has an isolated singular component and an  embedded component.

\begin{example} \label{ex:d2}Consider $t=2$ and $\matrixPP_{[1,1]}=\matrixPP$. We can explicitly write $I_\matrixPP\subset k[w_{1,1},w_{1,2},w_{2,1},w_{2,2}]$:
$$I_\matrixPP=(w_{1,1}^2+w_{1,2}w_{2,1}, (w_{1,1}+w_{2,2})w_{1,2}, (w_{1,1}+w_{2,2})w_{2,1}, w_{2,1}w_{1,2}+w_{2,2}^2).$$
Computing a primary decomposition of it, we see that  the scheme $W:=\Spec(k[w_{h,m},h=1,2,m=1,2]/I_\matrixPP)$ is   formed by an irreducible isolated $2$-dimensional  component $C$  given by $$I_C=(w_{1,1}+w_{2,2},  w_{1,1}w_{2,2}-w_{1,2}w_{2,1})$$ where $w_{1,1}+w_{2,2}=\tr(\matrixPP)$  and   
$ w_{1,1}w_{2,2}-w_{1,2}w_{2,1}=\det(\matrixPP)$ and an embedded component at the point of $C$ corresponding to the null matrix, given for instance by the ideal 
$$( w_{1,2},w_{2,1},w_{1,1}^{2},w_{2,2}^2)
$$
(computations of a  primary decomposition have been  made by the Maple software).

Note that  $I_C$ is also generated by $ w_{1,1}+w_{2,2} $ and  $w_{1,1}^2+w_{1,2}w_{2,1}$, hence $C$ is a quadric cone  in a hyperplane of  $\mathbb A^4$ and the vertex corresponds to the null matrix.

\end{example}

We will now describe $\QUOTZ$with $Y=t R$. 
 For every $M$ free module belonging to $\QUOTZf(A)$, if $M$ belongs to several open subsets $Q^\ast_{[i_1,\dots,i_s],Y}(A)$, we consider the one given by the largest list of indexes according to lex, because this open subsets allows us to characterize more precisely the geometric features of $\QUOTZ$ at $M$.
   In particular,  we  will  be interested in  $Q^\ast_{[t],Y}$, the one having the lex-largest list of indices.

\begin{theorem}\label{thm:sing}
Let  $r, t\geq2$,   and let $Y$ be the $0$-dimensional degree t  scheme $tR$ in $\mathbb P^1$.  \\
Then  $\QUOTZ$ is an irreducible non-reduced  scheme  of dimension $t(r-1)$. Its singular locus    is contained  in the complementary of $\mathbb Q^\ast_{[t],Y}$, 
while  $\mathbb Q^\ast_{[t],Y}$ is 
 locally isomorphic to   $\AA^{t(r-1)}$. 
\end{theorem}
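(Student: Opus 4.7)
The proof breaks into three assertions: (i) $\mathbb Q^\ast_{[t],Y}$ is smooth and locally isomorphic to $\AA^{t(r-1)}$ (from which the singular-locus statement is immediate); (ii) $\QUOTZ$ is irreducible of dimension $t(r-1)$; and (iii) $\QUOTZ$ is non-reduced. The first is explicit computation in a chart, the second a density argument, the third a reducedness test in a chart where smoothness fails.

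For (i), I would work in the chart $\mathbb Q_{[t],Y}$, where $\matrixPP_{[t]}$ has companion shape on its first block: columns $1,\ldots,t-1$ form the shifted identity and column $t$ carries free parameters $c_1,\ldots,c_t$, while the first columns $B_{m,1}=(w_{1,m},\ldots,w_{t,m})^T$ of blocks $m\geq 2$ contribute $t$ further parameters each. A direct computation shows that the first column of $\matrixPP_{[t]}^t$ equals $(c_1,\ldots,c_t)^T$, so $\matrixPP_{[t]}^t=0$ forces $c_1=\cdots=c_t=0$; conversely, once the $c_i$ are zero, $\matrixPP_{[t]}$ is the standard nilpotent shift matrix, so $\matrixPP_{[t]}^t$ vanishes identically and the remaining conditions $\matrixPP_{[t]}^t B_{m,1}=0$ for $m\geq 2$ hold automatically. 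Hence $\mathbb Q_{[t],Y}\cong\Spec k[w_{h,m}]_{h=1,\ldots,t,\,m=2,\ldots,r}\cong\AA^{t(r-1)}$; covering $\mathbb Q^\ast_{[t],Y}$ by such pieces via linear coordinate changes on $\mathbb P^1$ and on $e_1,\ldots,e_r$ gives the local isomorphism. This open is therefore smooth, so the singular locus of $\QUOTZ$ is contained in its complement.

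For (ii), I would observe that a module in $\mathbb Q^\ast_{[t],Y}$ is exactly a \emph{cyclic} $\mathcal O_Y$-module, i.e.\ one isomorphic to $\mathcal O_Y$, and then show that cyclic modules form a dense open in every chart. When $t\leq r$ the argument is cleanest in $\mathbb Q_{[1,\ldots,1],Y}$: by the analysis surrounding \eqref{varw} this chart is isomorphic to $\{\matrixPP\in\AA^{t^2}:\matrixPP^t=0\}\times\AA^{t(r-t)}$, whose reduced scheme is the nilpotent cone (irreducible of dimension $t(t-1)$) crossed with $\AA^{t(r-t)}$, of total dimension $t(r-1)$. The open stratum of \emph{regular} nilpotent matrices there corresponds precisely to cyclic modules, because a regular nilpotent $\matrixPP$ has a cyclic vector. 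The same strategy, with some combinatorial bookkeeping, handles every other partition (and the case $t>r$, where one falls back on a partition with $s\leq r$ parts): a generic $\matrixPP$ in any chart admits a cyclic vector, yielding a cyclic module that lies in $\mathbb Q^\ast_{[t],Y}$ after a change of basis. Hence $\mathbb Q^\ast_{[t],Y}$ is open dense in $\QUOTZ$, so $\QUOTZ$ is irreducible of dimension $t(r-1)$.

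For (iii), I would exploit any chart $\mathbb Q_{[i_1,\ldots,i_s],Y}$ with $s\geq 2$, e.g.\ $[t-1,1]$ (available for all $r\geq 2$). The trace of $\matrixPP_{[i_1,\ldots,i_s]}$ is the nonzero linear form $\sum_{m=1}^{s} w_{j_m+i_m-1,m}$ (one parameter per block), whereas the defining ideal is generated by the entries of $\matrixPP^t$, which are homogeneous of degree $t\geq 2$ in the parameters. A degree-$t$ ideal cannot contain any linear element, yet the trace lies in the radical (all nilpotent matrices have vanishing trace). Thus the defining ideal is strictly contained in its radical; equivalently, the chart has an embedded primary component, as verified explicitly for $t=2$ in Example~\ref{ex:d2}. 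Consequently $\QUOTZ$ is non-reduced. The main obstacle is part (ii): in every chart $\mathbb Q_{[i_1,\ldots,i_s],Y}$ the constraints from $\matrixPP^t e_{j_m}=0$ and $\matrixPP^t B_{m,1}=0$ intertwine the matrix parameters with the free columns $B_{m,1}$ for $m>s$, so one must verify uniformly over all partitions that a generic point of the chart still yields a module with a cyclic vector.
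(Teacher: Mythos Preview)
Your argument for (i) is essentially the paper's. For (iii), however, there is a concrete error. You assert that in the chart $[t-1,1]$ the entries of $\matrixPP_{[t-1,1]}^{\,t}$ are homogeneous of degree $t$ in the parameters $w_{h,m}$, and hence that the defining ideal $J$ contains no linear form. This is false as soon as $t\geq 3$: the first $t-2$ columns of $\matrixPP_{[t-1,1]}$ are standard basis vectors (constants, not parameters), so multiplying by them does not raise the degree. For example, when $t=3$ the $(2,1)$ entry of $\matrixPP_{[2,1]}^{\,3}$ equals $w_{1,1}+w_{2,1}^{2}+w_{3,1}w_{2,2}$, which has a nonzero linear part. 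Thus $J$ does contain elements with nontrivial linear part, and your degree count alone does not show that the trace $w_{t-1,1}+w_{t,2}$ lies outside $J$. The paper's argument here is finer: it checks that the specific variable $w_{t,2}$ appears only in monomials of degree $\geq 2$ in every entry of $\matrixPP_{[t-1,1]}^{\,t}$, so the $k$-span of the degree-one parts of the generators of $J$ misses $w_{t,2}$, and therefore misses the trace. (Your homogeneity claim \emph{is} valid in the chart $[1,\ldots,1]$, where $\matrixPP$ has no constant columns; but that chart exists only when $t\leq r$, which is precisely why the paper works in $[t-1,1]$, available for all $r\geq 2$.)

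For (ii) you take a different and heavier route than the paper. The paper obtains irreducibility of $\mathbb Q^\ast_{[t],Y}$ by a one-line change-of-basis observation: any two $k$-points of $\mathbb Q^\ast_{[t],Y}$ can, after a sufficiently general $\mathrm{GL}_r(k)$-change of $e_1,\ldots,e_r$, be placed simultaneously in the single chart $\mathbb Q_{[t],Y}\cong\AA^{t(r-1)}$, which is irreducible. You instead try to show that cyclic modules (equivalently, those in $\mathbb Q^\ast_{[t],Y}$) are dense in \emph{every} chart $\mathbb Q_{[i_1,\ldots,i_s],Y}$; your identification of cyclicity with regularity of $\matrixPM$ is the right conceptual link, but carrying the density out uniformly over all partitions is exactly the bookkeeping you flag as the main obstacle, and the paper's approach sidesteps it entirely.
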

 
\begin{proof} 
As always, we assume that $R$ is the point of $\PP^1$ given by $y=0$, so that $tR$ is given by $y^t=0$.

We first prove the statement concerning  $\mathbb Q^\ast_{[t],Y}$  showing that  $\mathbb Q_{[t],Y}$ is isomorphic to $\mathbb A^{r(t-1)}$.
Arguing  as in the proof of Corollary \ref{cor:isom}, we see that the modules in $\mathbb Q_{[t]}(A)$ are parameterized by  $t\cdot r$ parameters (the entries of the column $t+1$ in the first block and the entries of  first column in each   following block of $\mathbf C(M)$). Moreover, the matrix $\matrixPP_{[t]}$ is formed by the columns $2, \dots, t$ of the $t\times t$ identity matrix and by the last  column of the first block of $\mathbf C(M)$, as follows
$$\matrixPP_{[t]}= \left[ \begin {array}{cccccc} 
0&0&\dots  &0&0&{w_{1,1}}
\\1&0&\dots&0  &0&{w_{2,1}}
\\ 0&1&\dots &0&0&{ w_{3,1}}
\\ \vdots &\vdots &\ddots &\vdots &\vdots&\vdots 
\\0&0&\dots &1&0&{w_{t-1,1}}
\\ 0 &0&\dots &0&1&{w_{t,1}}
\end {array} \right] 
$$
where the symbols for the   entries of the last column are choosen coherently with  \eqref{varw}.

As in \eqref{varw},  the  ideal of $k[w_{h,m}]_{h=1, \dots, t, \ m=1, \dots, r}$ defining   $\mathbb Q_{[t],Y}$ in  $\mathbb Q_{[t]}\simeq \mathbb A^{rt}$ is generated by the entries of   $\matrixPP_{[t]}^t$, that only involve the last  variables $w_{1,1}, \dots, w_{t,1}$. The multiplication of a matrix $D$  by $\matrixPP_{[t]}$ gives a matrix whose column $i$ is the column $i+1$ of $D$, for every $i=1, \dots , d-1$;  hence    the first column of $\matrixPP_{[t]}^t$ coincides with the last one in $\matrixPP_{[t]}$.   Therefore the ideal $I_{\matrixPP^t_{[t]}}$ contains ( hence    coincides with)  the    ideal generated by  $w_{1,1}, \dots, w_{t,1}$, so that  $\mathbb Q_{[t],Y}$ is isomorphic to $\AA^{t(r-1)}$.

In order to prove that $\mathbb Q_{[t],Y}^\ast$ is irreducible, it is sufficient to observe that, for every pair of $k$-points of $\mathbb Q_{[t],Y}^\ast$  we may performe a sufficiently  general change of the basis $e_1, \dots, e_r$ such  that both  of them  belong to $\mathbb Q_{[t],Y}$, which is  is irreducible, as we have just proved.

Now we prove that   $\QUOTZ$ has embedded components; by what we have just proved they must be supported on $\QUOTZ\setminus\mathbb  Q^\ast_{[t]} $.  To this aim we consider 
 the open subscheme 
$\mathbb  Q_{[t-1,1],Y}$ of $\QUOTZ$ and consider it as a closed subscheme of $\mathbb  Q_{[t-1,1]}$

Following the description   given in Proposition \ref{star},   $\mathbb  Q_{[t-1,1]}$ is isomorphic to $\mathbb A^{rt} $,  the  matrix $\matrixPP_{[t-1,1]}$ is as follows
\begin{equation} \label{[t-1,1]}  \matrixPP_{[t-1,1]}= \left[ \begin {array}{cccccc} 
0&0&\dots  &0&{w_{1,1}}&{w_{1,2}}
\\1&0&\dots&0  &{w_{2,1}}&{w_{2,2}}
\\ 0&1&\dots &0&{ w_{3,1}} &{w_{3,2}}
\\ \vdots &\vdots &\ddots &\vdots &\vdots&\vdots 
\\ 0&0&\dots &1&{w_{t-1,1}}&{w_{t-1,2}}
\\ 0 &0&\dots &0&{w_{t,1}}&{w_{t,2}}
\end {array} \right] 
\end{equation} 
and the  ideal $J$ defining  $\mathbb Q_{[t-1,1],Y}$ as a subscheme of $\mathbb Q_{[t-1,1]}
\simeq \mathbb A^{tr} $  is generated  by  the entries   of  $\matrixPP_{[t-1,t]}^t$. Note that   $\matrixPP_{[t-1,t]}^t=0$ gives  constraints   only involvig  the variables $w_{i,j}$ with $j=1,2$.  

We observe that the matrix $\matrixPM$ associated to any  $k$-point of $\mathbb Q_{[t-1,1],Y}$ is  given by setting   $w_{i,j}=c_{i,j}\in k$ for every  $i=1, \dots, t$, $j=1,2$  in $\matrixPP_{[t-1,t]}$. $\matrixPM$ has characteristic polynomial $T^t$, so that in particular it must have  a null trace, namely  $c_{1,t-1}+c_{2,t}=0$, so that the polynomial $w_{t-1,1}+w_{t,2}$ vanishes on the support of $\mathbb Q_{[t-1,1],Y}$, namely  $w_{t-1,1}+w_{t,2} \in\sqrt J$.  

An easy check  shows that  in $\matrixPP_{[t-1,1]}^2$ (and then in any following power of $\matrixPP_{[t-1,1]}$)   the variable  $w_{t,2}$  appears only in monomials of degree  larger than $1$, so that  $w_{t-1,1}+w_{t,2}\notin J$.  
 Therefore, $\mathbb Q_{[t-1,1],Y}$   is not reduced.  
 
 Observe that $\mathbb Q_{[t-1,1],Y}$  intersects properly $\mathbb Q_{[t],Y}$: consider for instance the quotient module of $F_Y$ given by the relations $x^t e_2=(x^t+y^t)e_1$ and $e_j=0$, for $j\geq 3$. This means that the non-reduced structure is only given by embedded components.
 
The  above   arguments 
  also prove that  $\sqrt J$ is generated by the non-leading coefficients of the characteristic polynomial of $\matrixPP_{[t-1,1]}$ and  it is a prime ideal. 

We can then conclude that $\mathbb Q_{[t-1,1],Y}$, being irreducible, non reduced,  and  generically smooth,  has  (at least) one embedded component.

Finally, we prove that the support of $\mathbb Q_{[t-1,1],Y}$ is singular.   
Let $M$ be any $k$ point of  the support of $\mathbb Q_{[t-1,1],Y}$  corresponding to setting $ w_{i,j}=0$ for every $j=1,2$ (and freely choosing values in $k$ for the remaining  $(r-1)t$  variables).
 Now we compute the Zariski tangent space at  $M$  to the support of $\mathbb Q_{[t-1,1],Y}$  in $\mathbb A^{tr}$  taking the linear component of the  non-leading  coefficients of the characteristic polynomial of $\matrixPP_{[t-1,1]}$. The linear part of the coefficient of $T^{i} $ is $w_{i,1}$ for $i=1, \dots, t-1$, while for $i=0$ the coefficient is the  determinant  $w_{t-1,1} w_{t,2}-w_{t,1}w_{t-1,2}$, which is  homogeneous of degree $2$. Therefore, the Zariski tangent space at $M$ has codimension $t-1$, while the codimension of  $\mathbb Q_{[t-1,1],Y}$ in $\mathbb A^{tr}$ is $t$.  
\end{proof}

We can now prove the main result of the paper.

\medskip

\noindent {\it Proof of Theorem \ref{thm:main}.}
It is sufficient to sum up the results of Proposition \ref{prop:fibers} and Theorem \ref{thm:sing}.
\qed

\medskip

 We conclude the paper with two examples.

The following example concerns the case  $r\geq t\geq2 $  and describes   singular points for $\QUOTZ$ which is not included in those considered in the proof of the previous result.

\begin{example}\label{ex:[1,..,1]} Let $r\geq t\geq2$ and let $Y=tR$; assume that $R$ is the point of $\PP^1$ given by $y=0$.

 If (and only if) this condition on  $t,r$ is fullfilled, we can find    in $\QUOTZ$ points corresponding to modules $M$ whose schematic   support is $R$. They are the points of $\mathbb Q_{[1,\dots,1],Y}^\ast$  such that $\matrixPM =0$ (while all the points in  $Q_{[i_1,\dots,i_n],Y}^\ast (A)$   with $i_1>1$ it holds    $\matrixPM^{i_1-1}\neq 0$, hence $\matrixPM \neq 0$).  In this example we describe these points and prove that they are all singular  in $\QUOTZ$.

Making reference to Corollary \ref{cor:isom} (and also to Example \ref{ex:casot=5}),  a  module $M$  in $Q_{[1,\dots,1]}(A)$ is determined by  the second columns of the blocks $B_1,\dots, B_t$ and the first columns of the following blocks of $\mathbf C(M)$. The matrix $\matrixPP_{[1,\dots,1]}=\matrixPP$ 
 has the following form 
\begin{equation} \label{[1,..,1]}  \matrixPP=\left[ \begin {array}{cccc} w_{{1,1}}&w_{{1,2}}&\dots &w_{{1,t}}
\\ w_{{2,1}}&w_{{2,2}}&\dots &w_{{2,t}}
\\ w_{{3,1}}&w_{{3,2}}&\dots &w_{{3,t}}
\\ \vdots &\vdots &\ddots &\vdots 
\\ w_{{t,1}}&w_{{t,2}}&\dots &w_{{t,t}}
\end {array} \right] \end{equation}

Furthermore, the affine scheme $\mathbb Q_{[1,\dots,1],Y}$ is defined by the ideal generated by the entries of the matrix $\matrixPP^t$ in the polynomial ring $k[w_{i,j}]$.   In $\mathbb A^{t^2}$, the entries of $\matrixPP^t$ define the so-called {\it null cone}. The arguments  used in the proof of Theorem \ref{thm:sing}  
 prove that it is irreducible and not reduced of dimension $t(t-1)$.
Hence, $\mathbb Q_{[1,\dots,1],Y}$ is the product of this null cone with an affine space of dimension $t(r-t)$.

Observe that  $Q_{[1,\dots,1],Y}(A)$ properly intersects $Q_{[t],Y}(A)$; consider for instance the module $M$ given by the quotient of $F_Y$ with the relations $x^t e_i-(x^t +x^{t-i+1}y^{i-1})e_1$, for $i=2,\dots,t$, and $e_i=0$ for $i\geq t+1$. This means that the non-reduced structure is only given by embedded components.

The locus in $\mathbb Q_{[1,\dots,1],Y}$    we are now  focusing on  is defined  by $\matrixPP=0$, hence it is isomorphic to $\mathbb A^{t(r-t)}$. All the points in this locus are singular points of $\QUOTZ$. Indeed, considering  the embedding  $\mathbb Q_{[1,\dots,1],Y}\hookrightarrow \mathbb Q_{[1,\dots,1]}\simeq\mathbb A^{tr}$,  the Zariski tangent space at each such point has codimension $t^2$, while the codimension of $ \mathbb Q_{[1,\dots,1],Y}$ is $t$. 

\end{example}

\begin{example}  For $r=t=2$ and $Y=2R$,  $\mathrm{Quot}^2_{\mathcal  F_Y/ Y /   k}$  has a unique singular point, the vertex of the cone described in Example \ref{ex:d2}.  Note that for $t=2$,  $\mathbb  Q_{[1,1],Y}=\mathbb  Q_{[t-1,1],Y}$, namely  Example \ref{ex:d2} is both  a special case of   Example \ref{ex:[1,..,1]} and of the case $\mathbb  Q_{[t-1,1],Y}$ in the proof of Thereom \ref{thm:sing}.  Indeed,   the  vertex of the cone  is actually the unique point in $\mathrm{Quot}^2_{\mathcal  F_Y/ Y /   k}  \setminus\mathbb  Q^\ast_{[2],Y}$, and it is not only singular for the support of  $\mathrm{Quot}^2_{\mathcal  F_Y/ Y /   k}$, but  at this point is supported   the only  embedded component  $\mathrm{Quot}^2_{\mathcal  F_Y/ Y /   k}$.  
\end{example}

\section*{Acknowledgements}
 The authors are grateful to Roy Skjelnes for several discussions and his insight on the topic of the present paper. The authors also
 thank Federica Galluzzi for discussions on the Hilbert-Chow morphism, that inspired the definition of the map $\xi$ of the present paper, and Qiacohu Yuan for useful suggestions on the null-cone of nilpotent matrices.

\end{document}